\newcommand{\blst}{\begin{trivlist}}
\newcommand{\elst}{\end{trivlist}}
\newcommand{\op}[1]{\operatorname{#1}}
\newtheorem{thm}{Theorem}[section]
\newtheorem{prop}[thm]{Proposition}
\newtheorem{cor}[thm]{Corollary}
\newtheorem{lem}[thm]{Lemma}
\newtheorem{conj}[thm]{Conjecture}
\newtheorem{defn}[thm]{Definition}
\newtheorem{problem}[thm]{Problem}
\newcommand{\fc}{{\rm fcyc}}
\newcommand{\Des}{{\rm Des\,}}
\newcommand{\aexc}{{\rm aexc\,}}
\newcommand{\des}{{\rm des\,}}
\newcommand{\cyc}{{\rm cyc\,}}
\newcommand{\fbk}{{\rm fbk\,}}
\newcommand{\bk}{{\rm bk\,}}
\newcommand{\bkone}{{\rm bkone\,}}
\newcommand{\exc}{{\rm exc\,}}
\newcommand{\RLMAX}{\op{RLMAX}}
\newcommand{\rlmax}{\op{rlmax}}
\newcommand{\msn}{\mathfrak{S}_n}
\newcommand{\Q}{\mathcal{Q}}
\newcommand{\fix}{{\rm fix\,}}
\newcommand{\asc}{{\rm asc\,}}
\newcommand{\Asc}{{\rm Asc\,}}
\newcommand{\Eulerian}[2]{\genfrac{<}{>}{0pt}{}{#1}{#2}}
\newcommand{\stirling}[2]{\genfrac{[}{]}{0pt}{}{#1}{#2}}
\newcommand{\Stirling}[2]{\genfrac{\{}{\}}{0pt}{}{#1}{#2}}
\newcommand{\arxiv}[1]{\href{http://arxiv.org/abs/#1}{\texttt{arXiv:#1}}}
\newcommand{\red}{{\rm red\,}}
\title{Recurrence relations for binomial-Eulerian polynomials}
\author[J.~Ma]{Jun Ma}
\address{Department of mathematics, shanghai jiao tong university, shanghai, china}
\email{majun904@sjtu.edu.cn(J.~Ma)}
\author[S.-M.~Ma]{Shi-Mei Ma}
\address{School of Mathematics and Statistics,
        Northeastern University at Qinhuangdao,
         Hebei 066004, P.R. China}
\email{shimeimapapers@163.com (S.-M. Ma)}
\author[Y.-N. Yeh]{Yeong-Nan Yeh}
\address{Institute of Mathematics,
        Academia Sinica, Taipei, Taiwan}
\email{mayeh@math.sinica.edu.tw (Y.-N. Yeh)}
\subjclass[2010]{Primary 05A15; Secondary 05A19}
\begin{document}

\maketitle
\begin{abstract}
Binomial-Eulerian polynomials were introduced by Postnikov, Reiner and Williams.
In this paper, properties of the binomial-Eulerian polynomials, including recurrence relations and generating functions are studied. We present three constructive proofs of the recurrence relations for binomial-Eulerian polynomials. Moreover, we give a combinatorial interpretation of the Betti number of the complement of the $k$-equal real hyperplane arrangement.
\bigskip\\
{\sl Keywords:} Binomial-Eulerian polynomials; Eulerian polynomials; Recurrence relations
\end{abstract}
\date{\today}
\section{Introduction}
Let $\msn$ denote the symmetric group of all permutations of $[n]$, where $[n]=\{1,2,\ldots,n\}$.
For each $\pi\in\msn$, an index $i$ is called {\it a descent} (resp. {\it an ascent}) of $\pi$ if $\pi(i)>\pi(i+1)$ (resp. $\pi(i)<\pi(i+1)$), where $i\in [n-1]$.
Define
\begin{align*}
\Des(\pi)&=\{\pi(i)\mid \pi(i)>\pi(i+1),i\in [n-1]\},~\des(\pi)=|\Des(\pi)|,\\
\Asc(\pi)&=\{\pi(i)\mid \pi(i)<\pi(i+1),i\in [n-1]\},~\asc(\pi)=|\Asc(\pi)|,
\end{align*}
where $|S|$ denote the cardinality of the set $S$.
The classical {\it Eulerian polynomials} $A_n(x)$ are defined by
\begin{equation}\label{Eulerian}
A_n(x)=\sum_{\pi\in\msn}x^{\des(\pi)}=\sum_{\pi\in\msn}x^{\asc(\pi)}.
\end{equation}
Let $A_n(x)=\sum_{k=0}^{n-1}\Eulerian{n}{k}x^k$, where $\Eulerian{n}{k}$ are called the Eulerian numbers.
The numbers $\Eulerian{n}{k}$ satisfy the recurrence relation
$$\Eulerian{n}{k}=(k+1)\Eulerian{n-1}{k}+(n-k)\Eulerian{n}{k},$$
with the initial conditions $\Eulerian{1}{0}=1$ and $\Eulerian{1}{k}=0$ for $k\geq 1$ (see~\cite[A008292]{Sloane}).
In~\cite{Chung10}, Chung, Graham and Knuth noted that if we set $\Eulerian{0}{0}=0$, then
the following symmetrical identity holds:
\begin{equation}\label{Eulerian01}
\sum_{k\geq 0}\binom{a+b}{k}\Eulerian{k}{a-1}=\sum_{k\geq 0}\binom{a+b}{k}\Eulerian{k}{b-1},
\end{equation}
where $a,b$ are positive integers. Subsequently, the $q$-generalizations of the identity~\eqref{Eulerian01} have been
pursued by several authors. See, e.g.,~\cite{Chung12,Han11,Lin13,Shareshian}.

Let $G=K_{1,n}$ be the $n$-star graph with the central node $n+1$
connected to the nodes $1,\cdots,n$. The associated polytope $P_{{\mathcal B}(K_{1,n})}$ is called
the {\it stellohedron}.
Following~\cite[Section~10.4]{Postnikov}, the $h$-polynomial of the $n$-dimensional stellohedron is given by
\begin{equation}\label{binomial-Eulerian}
h_{{\mathcal B}(K_{1,n})}(x)=1+x\sum_{k=1}^n\binom{n}{k}A_k(x),
\end{equation}
which is named as the {\it binomial-Eulerian polynomial} (see~\cite{Shareshian}).
As usual, let $$\widetilde{A}_n(x)=h_{{\mathcal B}(K_{1,n})}(x).$$
The $\gamma$-positivity of $\widetilde{A}_n(x)$ follows from a general result of Postnikov, Reiner
and Williams~\cite[Theorem~11.6]{Postnikov}. As an application of the $\gamma$-positivity, we see that
$\widetilde{A}_n(x)$ is symmetric.
Very recently, Shareshian and Wachs~\cite{Shareshian} further studied $\gamma$-positivity of the binomial-Eulerian and $q$-binomial-Eulerian polynomials,
and noticed that the identity~\eqref{Eulerian01} is equivalent to the symmetry of $\widetilde{A}_n(x)$. The
reader is referred to~\cite{Athanasiadis} for a survey of the theory of $\gamma$-positivity.

\begin{defn}
Let $\Q_n$ be the set of permutations of $[n]$ with the restriction that the entry $n$ appears as the first descent. For convenience, let the identity permutation $12\cdots n$ be an element of $\Q_n$ and we say that the entry $n$ appears as the first descent of $12\cdots n$ (In fact, the identity permutation has no descent).
\end{defn}
For example, $\Q_1=\{1\},\Q_2=\{12,21\}$ and $\Q_3=\{123,132,231,312,321\}$.
Postnikov, Reiner and Williams~\cite[Section~10.4]{Postnikov} discovered that
$$\widetilde{A}_n(x)=\sum_{\pi\in\Q_{n+1}}x^{\des(\pi)}.$$
The first few of $\widetilde{A}_n(x)$ are given as follows:
\begin{align*}
\widetilde{A}_0(x)=1,
\widetilde{A}_1(x)=1+x,
\widetilde{A}_2(x)=1+3x+x^2,
\widetilde{A}_3(x)=1+7x+7x^2+x^3.
\end{align*}

It is clear that the ascent and descent statistics are equidistributed on $\msn$,
since reversing an element of $\msn$ turns ascents into descents and vice versa.
It is less obvious that ascent and descent statistics are equidistributed on $\Q_n$, since
reversing an element of $\Q_n$ may leads to an element of $\msn\backslash{\Q_n}$.

This paper is motivated by the following problem.
\begin{problem}\label{problem}
Is there a bijective proof of the symmetry of $\widetilde{A}_n(x)$ by using the descent and ascent statistics on $\Q_n$?
\end{problem}

This paper is organized as follows.
In Section~\ref{section02}, we present three constructive proofs of the recurrence relations for $\widetilde{A}_n(x)$.
In Theorem~\ref{equidistributed-bijection}, as a combination of the first two constructive proofs, we give a solution to Problem~\ref{problem}.
In Section~\ref{section03}, we study the generating function of a kind of multivariable binomial-Eulerian polynomials.
As an application, in Theorem~\ref{Betticombinatorial}, we give a combinatorial interpretation of the Betti number of the complement of the $k$-equal real hyperplane arrangement.
\section{Recurrence relations}\label{section02}
\subsection{The descent statistic on $\Q_n$}
\hspace*{\parindent}

It is well known that the Eulerian polynomials $A_n(x)$ satisfy the recurrence relation
$$A_{n+1}(x)=(1+nx)A_n(x)+x(1-x)A_n'(x),$$
with the initial values $A_0(x)=A_1(x)=1$ (see~\cite{Bre00} for instance), and they
can be defined by the exponential generating function
$$A(x,z)=\sum_{n\geq 0}A_n(x)\frac{z^n}{n!}=\frac{x-1}{x-e^{z(x-1)}}.$$
It is easy to verify that
\begin{equation}\label{Euler-recurrence01}
(1-xz)\frac{\partial}{\partial z}A(x,z)=A(x,z)+x(1-x)\frac{\partial}{\partial x}A(x,z).
\end{equation}
Set $\widetilde{A}_0(x)=1$.
We define $\widetilde{A}(x,z)=\sum_{n\geq 0}\widetilde{A}_n(x)\frac{z^n}{n!}$.
It follows from~\eqref{binomial-Eulerian} that
\begin{equation}\label{Axz-exz}
\widetilde{A}(x,z)=e^{xz}A(x,z).
\end{equation}
Combining~\eqref{Euler-recurrence01} and~\eqref{Axz-exz}, we obtain
\begin{equation}\label{Euler-recurrence02}
(1-xz)\frac{\partial}{\partial z}\widetilde{A}(x,z)=(1+x-xz)\widetilde{A}(x,z)+x(1-x)\frac{\partial}{\partial x}\widetilde{A}(x,z).
\end{equation}
Let $\widetilde{A}_n(x)=\sum_{k=0}^n\widetilde{A}(n,k)x^k$. Equating the coefficients of $x^kz^n/{n!}$ in both sides of~\eqref{Euler-recurrence02} leads to the following result.
\begin{thm}\label{thm01}
For $n\geq1$, we have
\begin{equation}\label{Enk-recurrence}
\widetilde{A}(n+1,k)=(k+1)\widetilde{A}(n,k)+(n-k+2)\widetilde{A}(n,k-1)-n\widetilde{A}(n-1,k-1),
\end{equation}
with the initial conditions $\widetilde{A}(0,0)=1$ and $\widetilde{A}(0,k)=0$ for $k\neq 0$.
\end{thm}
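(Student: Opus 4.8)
The proof is a direct coefficient extraction from the partial differential equation~\eqref{Euler-recurrence02}, which we may take as already established. The plan is to substitute the double series $\widetilde{A}(x,z)=\sum_{n\ge 0}\sum_{k=0}^{n}\widetilde{A}(n,k)\,x^kz^n/n!$ into both sides of~\eqref{Euler-recurrence02}, expand each of the five terms, and read off the coefficient of $x^kz^n/n!$. First I would record the effect of the two differential operators: $\partial_z$ shifts the $z$-index down, so the coefficient of $x^kz^n/n!$ in $\partial_z\widetilde{A}(x,z)$ is $\widetilde{A}(n+1,k)$, while $\partial_x$ sends $x^kz^n/n!$ to $k\,x^{k-1}z^n/n!$. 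Then I would track what multiplication by the polynomial prefactors $1-xz$, $1+x-xz$, and $x(1-x)$ does to the coefficients.

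The only mildly delicate point is multiplication by $z$: since $z^{n+1}/n!=(n+1)\,z^{n+1}/(n+1)!$, a factor of $z$ shifts the $z$-exponent up by one and simultaneously introduces a factor equal to the new $z$-exponent. Carrying this out term by term, the term $-xz\,\partial_z\widetilde{A}$ contributes $-n\,\widetilde{A}(n,k-1)$ to the coefficient of $x^kz^n/n!$, the term $-xz\,\widetilde{A}$ contributes $-n\,\widetilde{A}(n-1,k-1)$, the term $\widetilde{A}$ contributes $\widetilde{A}(n,k)$, the term $x\widetilde{A}$ contributes $\widetilde{A}(n,k-1)$, and $x(1-x)\partial_x\widetilde{A}$ contributes $k\,\widetilde{A}(n,k)-(k-1)\,\widetilde{A}(n,k-1)$. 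Assembling these, the coefficient of $x^kz^n/n!$ on the left of~\eqref{Euler-recurrence02} is $\widetilde{A}(n+1,k)-n\,\widetilde{A}(n,k-1)$, while on the right it is
$$(k+1)\widetilde{A}(n,k)+(2-k)\widetilde{A}(n,k-1)-n\,\widetilde{A}(n-1,k-1),$$
and solving for $\widetilde{A}(n+1,k)$ gives precisely~\eqref{Enk-recurrence}.

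Finally, the initial conditions $\widetilde{A}(0,0)=1$ and $\widetilde{A}(0,k)=0$ for $k\ne 0$ are simply the statement that the $z^0$-coefficient of $\widetilde{A}(x,z)$ equals $\widetilde{A}_0(x)=1$; the restriction $n\ge 1$ is just what is needed for the term $\widetilde{A}(n-1,k-1)$ to refer to a nonnegative $z$-index, and one checks that the boundary cases outside $0\le k\le n+1$ vanish so that~\eqref{Enk-recurrence} is consistent for all $k$. I do not expect any genuine obstacle here beyond careful bookkeeping of the index shifts induced by the $xz$-factors; the substantive content of the identity is carried by the PDE~\eqref{Euler-recurrence02} itself.
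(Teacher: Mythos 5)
Your proposal is correct and is essentially the paper's own proof: the paper derives Theorem~\ref{thm01} precisely by equating the coefficients of $x^kz^n/n!$ on both sides of~\eqref{Euler-recurrence02}, and your bookkeeping of the index shifts (in particular the factor $n$ produced by multiplication by $z$) checks out. The paper's separate bijective argument via the sets ${RD}_{n+2,k}$, ${FD}_{n+1,k}$, etc.\ is an additional, independent proof of the same recurrence, not the one you were asked to reproduce.
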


In the following, we present a constructive proof of the recurrence relation~\eqref{Enk-recurrence}.
Let $\alpha_i(\pi)$ be the permutation in $\mathfrak{S}_{n-1}$ obtained from $\pi$ by the following two steps:
\begin{itemize}
\item Step 1. Delete the entry $i$ from $\pi$;
\item Step 2. Every entry in $\pi$, which is larger than $i$, is decreased by $1$.
\end{itemize}
Let $\beta_{i,j}(\pi)$ be the permutation in $\mathfrak{S}_{n+1}$ obtained from $\pi$ by the following two steps:
\begin{itemize}
\item Step 1. Every entry in $\pi$, which is larger than or equal to $i$, is increased by $1$;
\item Step 2. Insert the entry $i$ between $j$-st and ($j$+$1$)-st elements of $\pi$.
\end{itemize}
In the sequel, we define
$$\Des^*(\pi)=\{0\}\cup \Des(\pi),$$
$$QD_{n,k}=\{\pi\in \Q_{n}\mid \des(\pi)=k\}.$$

Denote by $FD_{n+1,k}$ the set of pairs $[\pi,i]$ such that $\pi\in QD_{n+1,k}$ and $i\in \{0,1,2,\ldots,k\}$.
Hence $$|{FD}_{n+1,k}|=(k+1)\widetilde{A}(n,k).$$
We use ${RD}_{n+2,k}$ to denote the set of permutations $\pi$ of $[n+2]$ which satisfy the following three conditions:
\begin{enumerate}[(1)]
\item the entry $n+2$ appears as the first descent of $\pi$ from left to right;
\item $\pi$ has $k$ descents;
\item Either $a=1$ or $\pi(a-1)>\pi(a+1)$, where $a=\pi^{-1}(1)$.
\end{enumerate}

\begin{lem}\label{(k+1)E(n,k)-bijection}
There is a bijection $\phi=\phi_{n,k}$ from ${RD}_{n+2,k}$ to ${FD}_{n+1,k}$.
\end{lem}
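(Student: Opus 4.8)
The construction of $\phi$ is, in the notation of the paper, the deletion map $\alpha_1$ decorated with a record of which ``descent slot'' the entry $1$ occupied, and its inverse is an instance of the re-insertion map $\beta_{1,\cdot}$. So let $\pi\in RD_{n+2,k}$, put $a=\pi^{-1}(1)$, and set $\sigma=\alpha_1(\pi)\in\ms_{n+1}$. First I would verify $\sigma\in QD_{n+1,k}$. Since $\pi\in\Q_{n+2}$, the entries of $\pi$ preceding $n+2$ form an increasing run, so $1$ can occur before $n+2$ only when $a=1$; hence for $a>1$ the entry $1$ lies strictly to the right of $n+2$, and $\alpha_1$ leaves the initial segment of $\pi$ up through $n+2$ intact apart from the uniform shift, so $n+1$ still marks the first descent of $\sigma$ (when $a=1$ this is immediate). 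For the descent count, deleting $1$ removes the ascent it created on its right, while condition~(3) -- either $a=1$, or $a>1$ and $\pi(a-1)>\pi(a+1)$ -- guarantees that the two entries becoming neighbours at the deletion site still form a descent (nothing to check when $a=1$); no other adjacency changes, so $\des(\sigma)=\des(\pi)=k$. (Condition~(3) forces $a\ne n+2$: were $1$ the last entry of $\pi$, deleting it would drop the descent count to $k-1$.) Thus $\sigma\in QD_{n+1,k}$.

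Next I would attach the index. If $a=1$, set $\phi(\pi)=[\sigma,0]$. If $a>1$, let $j$ be the number of indices $t$ with $1\le t\le a-1$ and $\pi(t)>\pi(t+1)$, so that $1\le j\le k$ (the index $t=a-1$ counts, since $\pi(a-1)>1$). Comparing the two words position by position, the descent indices of $\sigma$ that are $\le a-1$ coincide with the descent indices of $\pi$ that are $\le a-1$, the largest being $a-1$ itself, where $\sigma$ now carries the descent $\sigma(a-1)>\sigma(a)$ arising from $\pi(a-1)>\pi(a+1)$; so this new descent is the $j$-th descent of $\sigma$ from the left. Put $\phi(\pi)=[\sigma,j]$. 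In both cases $\phi(\pi)\in FD_{n+1,k}$.

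For the inverse, given $[\sigma,i]\in FD_{n+1,k}$ let $p$ be $0$ if $i=0$ and otherwise the $i$-th smallest index $t$ with $\sigma(t)>\sigma(t+1)$, and let $\sigma^{+}$ be $\sigma$ with every entry increased by $1$; set $\psi([\sigma,i])=\beta_{1,p}(\sigma)$, namely $\sigma^{+}$ with a new $1$ inserted directly after its $p$-th entry (at the front when $p=0$). That $\psi([\sigma,i])\in RD_{n+2,k}$ is checked directly: for $i=0$ the new $1$ sits in front, so condition~(3) holds, the first descent -- now occupied by $n+2$, the image of $n+1$ -- is unchanged, and no descent is gained or lost; for $i\ge1$ the chosen descent index $p$ of $\sigma$ is at least the first-descent index of $\sigma$, so the increasing block of $\sigma^{+}$ up through $n+2$ is left alone and $n+2$ still marks the first descent, the descent at position $p$ is merely replaced by $\sigma^{+}(p)>1$ so the number of descents is preserved, and the two neighbours of the inserted $1$ witness condition~(3).

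It remains to see that $\phi$ and $\psi$ are mutually inverse. Deleting from $\psi([\sigma,i])$ the $1$ just inserted and restandardising returns $\sigma$, and by the position count used in the second paragraph the descent it reconstitutes is the $i$-th descent of $\sigma$, whence $\phi(\psi([\sigma,i]))=[\sigma,i]$; the composite $\psi\circ\phi$ is handled symmetrically. I expect the only step needing real care -- and the main obstacle -- to be this matching of descent indices across the last three paragraphs, and especially the small cases $k=0$, $a=2$, and $1$ positioned immediately after $n+2$ in $\pi$ (where condition~(3) holds automatically); everything else is routine membership and descent-count bookkeeping.
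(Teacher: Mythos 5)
Your construction is the same as the paper's: delete the entry $1$ via $\alpha_1$, record which descent slot (or the front) it occupied, and invert with $\beta_{1,\cdot}$; the only difference is an immaterial relabelling of the index set (you put the ``front'' case at $i=0$, the paper puts it at $i=k$). Your membership and descent-count checks are correct and in fact more detailed than the paper's, so the proposal is fine.
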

\begin{proof}
For any $\pi\in {RD}_{n+2,k}$, let $a=\pi^{-1}(1)$ and $\sigma=\alpha_1(\pi)$. Clearly, $\sigma\in {QD}_{n+1,k}$. Suppose that $$\Des^*(\sigma)=\{j_0,j_1,\ldots,j_{k}\}$$ with $j_0<j_1<\ldots<j_{k-1}$ and $j_k=0$.
Note that  $a-1\in \Des^*(\sigma)$. Suppose that $j_i=a-1$ for some $i\in\{0,1,\ldots,k\}$. Define a map $\phi:{RD}_{n+2,k}\mapsto {FD}_{n+1,k}$ by letting $\phi(\pi)=[\alpha_1(\pi),i]$.

Conversely, for any $[\sigma,i]\in {FD}_{n+1,k}$, suppose that $\Des^*(\sigma)=\{j_0,j_1,\ldots,j_{k}\}$ with $j_0<j_1<\ldots<j_{k-1}$, $j_k=0$ and $a=j_i$. Let us consider the permutation $\pi=\beta_{1,a}(\sigma)$. Then $\pi(1)=1$ if $a=0$; otherwise, $\pi(a+1)=1$ and $\pi(a)>\pi(a+2)$ since $\sigma(a)>\sigma(a+1)$. So, $\pi\in {RD}_{n+2,k}$. Thus, for any $[\sigma,i]\in {FD}_{n+1,k}$, the inverse $\phi^{-1}$ of the map $\phi$ is given by $\phi^{-1}(\sigma,i)=\beta_{1,a}(\sigma)$.
\end{proof}

Let ${HD}_{n+1,k-1}$ be the set of pairs $[\pi,i]$ such that $\pi\in{QD}_{n+1,k-1}$ and $i\in \{1,2,\ldots,n-k+2\}$.
Then $$|{HD}_{n+1,k-1}|=(n-k+2)\widetilde{A}({n,k-1}).$$
Denote by ${RHD}_{n+1,k-1}$ the set of pairs $[\pi,i]$ such that $[\pi,i]\in{HD}_{n+1,k-1}$ and $i> \pi^{-1}(n+1)-1$.
We use $\overline{{RD}}_{n+2,k}$ to denote the set of permutations $\pi$ of $[n+2]$ which satisfy the following three conditions:
\begin{enumerate}[(1)]
\item the entry $n+2$ appears as the first descent of $\pi$ from left to right;
\item $\pi$ has $k$ descents;
\item Either $a=n+2$ or $\pi(a-1)<\pi(a+1)$, where $a=\pi^{-1}(1)$.
\end{enumerate}
\begin{lem}\label{bar-D(n+2,k-1)-bijection}
There is a bijection $\theta=\theta_{n,k}$ from $\overline{{RD}}_{n+2,k}$ to ${RHD}_{n+1,k-1}$.
\end{lem}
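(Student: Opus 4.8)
The strategy mirrors the proof of Lemma~\ref{(k+1)E(n,k)-bijection}: the map $\theta$ again deletes the letter $1$ via $\alpha_1$, but now the index $i$ of the target pair encodes the \emph{rank} of the position of $1$ among the ascent positions of the reduced permutation (with one extra slot adjoined), and the defining inequality $i>\pi^{-1}(n+1)-1$ of $RHD_{n+1,k-1}$ translates into the statement that this position lies to the right of the letter $n+2$ in $\pi$. Precisely, for $\pi\in\overline{RD}_{n+2,k}$ set $a=\pi^{-1}(1)$ and $\sigma=\alpha_1(\pi)$, write $\Asc(\sigma)\cup\{n+1\}=\{t_1<t_2<\cdots<t_{n-k+2}\}$, and define $\theta(\pi)=[\sigma,i]$ where $a-1=t_i$.

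First I would verify that $\sigma\in QD_{n+1,k-1}$ and that $i$ is legitimate. Since $\pi\in\overline{RD}_{n+2,k}$ we have $a\ne1$, hence $\pi(1)\ge2$; as $\pi(1)<\pi(2)<\cdots<\pi(p)=n+2$ where $p$ is the position of the first descent of $\pi$, the letter $1$ cannot occur among the first $p$ entries, so $a\ge p+1$. Consequently deleting $1$ leaves the increasing run $\pi(1)<\cdots<\pi(p)=n+2$ and the descent at $p$ intact, so $\sigma$ has first descent at $p$ with $\sigma(p)=n+1$; thus $\sigma\in\Q_{n+1}$ and $\sigma^{-1}(n+1)=p$. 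For the descent count, condition~(3) of $\overline{RD}_{n+2,k}$ gives two cases: if $a=n+2$ then erasing the trailing $1$ destroys exactly the descent at position $n+1$; if instead $\pi(a-1)<\pi(a+1)$ then in $\sigma$ the entries previously at positions $a-1$ and $a+1$ become adjacent and ascending, so the descent of $\pi$ at position $a-1$ vanishes and nothing else changes. Either way $\des(\sigma)=k-1$, so that $\Asc(\sigma)\cup\{n+1\}$ has $\asc(\sigma)+1=n-k+2$ elements. The same dichotomy shows $a-1\in\Asc(\sigma)\cup\{n+1\}$ and $a-1\ge p$; and because the first $p-1$ ascent positions of $\sigma$ are exactly $1,2,\dots,p-1$, any element of $\Asc(\sigma)\cup\{n+1\}$ that is $\ge p$ has rank $\ge p$, so $i\ge p=\sigma^{-1}(n+1)$ and hence $[\sigma,i]\in RHD_{n+1,k-1}$.

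For the inverse, take $[\sigma,i]\in RHD_{n+1,k-1}$, put $\{t_1<\cdots<t_{n-k+2}\}=\Asc(\sigma)\cup\{n+1\}$, set $a=t_i+1$, and let $\pi=\beta_{1,a-1}(\sigma)$. The inequality $i>\sigma^{-1}(n+1)-1$, again via the fact that the first $p-1$ ascents of $\sigma$ (with $p=\sigma^{-1}(n+1)$) are $1,\dots,p-1$, forces $t_i\ge p$, so $a\ge p+1$ and the letter $1$ is inserted to the right of $n+1$. One then checks that $n+2$ is the first descent of $\pi$, that $\des(\pi)=k$ (if $a=n+2$ the new trailing $1$ creates one descent; otherwise $t_i\in\Asc(\sigma)$ and inserting $1$ at position $a$ turns that ascent into a descent--ascent pair, a net gain of one), and that condition~(3) of $\overline{RD}_{n+2,k}$ holds, the latter being literally the assertion $t_i\in\Asc(\sigma)\cup\{n+1\}$. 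Since $\alpha_1\beta_{1,a-1}=\mathrm{id}$ and $\beta_{1,a-1}\alpha_1$ returns $\pi$ (because $a=\pi^{-1}(1)$), while the rank $i$ is visibly preserved, the two maps are mutual inverses.

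The only thing requiring care is the bookkeeping of how positions shift under deletion and insertion of $1$; the genuinely substantive points are the inequality $a\ge\sigma^{-1}(n+1)+1$ valid for every $\pi\in\overline{RD}_{n+2,k}$ (which makes $\alpha_1$ preserve membership in $\Q_{n+1}$ and drop the descent number by exactly one) and the dual fact that the defining inequality of $RHD_{n+1,k-1}$ is precisely what forces $\beta_{1,a-1}(\sigma)$ back into $\overline{RD}_{n+2,k}$.
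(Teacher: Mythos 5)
Your proof is correct and follows essentially the same route as the paper's: delete the entry $1$ via $\alpha_1$, record the rank of its former position among $\Asc(\sigma)\cup\{n+1\}$, and invert with $\beta_{1,\cdot}$. In fact you supply the justification (via $a\ge\sigma^{-1}(n+1)+1$ and the initial increasing run) for the inequality $i>\sigma^{-1}(n+1)-1$, which the paper merely asserts.
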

\begin{proof}
For any $\pi\in\overline{{RD}}_{n+2,k}$, let $a=\pi^{-1}(1)$ and $\sigma=\alpha_1(\pi)$. Clearly, $\sigma\in{{QD}}_{n+1,k-1}$ and $\asc(\sigma)=n-k+1$. Suppose that $$\Asc^{*}(\sigma)=\{j_1,j_2,\ldots,j_{n-k+2}\}$$ with $j_1<j_2<\ldots<j_{n-k+2}=n+1$. Note that $a-1\in \Asc^{*}(\sigma)$. Suppose that $j_i=a-1$ for some $i\in\{1,2,\ldots,n-k+2\}$. Then $i> \sigma^{-1}(n+1)-1$. Define a map $\theta:\overline{{RD}}_{n+2,k}\mapsto {RHD}_{n+1,k-1}$ by letting $\theta(\pi)=[\alpha_1(\pi),i]$.

Conversely, for any $[\sigma,i]\in {RHD}_{n+1,k-1}$, we have $\asc(\sigma)=n-k+1$. Suppose that $$\Asc^{*}(\sigma)=\{j_1,j_2,\ldots,j_{n-k+2}\}$$ with $j_1<j_2<\ldots<j_{n-k+2}=n+1$ and $a=j_i$. Let us consider the permutation $\pi=\beta_{1,a}(\sigma)$.
  Thus, $\pi^{-1}(1)=n+2$ if $a=n+1$; otherwise, $\pi(a+1)=1$ and $\pi(a)<\pi(a+2)$ since $\sigma(a)<\sigma(a+1)$. Hence $\pi\in\overline{{RD}}_{n+2,k}$ since $i>\sigma^{-1}(n+1)-1$. Therefore, the inverse $\theta^{-1}$ of the map $\theta$ is $\theta^{-1}(\sigma,i)=\beta_{1,a}(\sigma)$ for any $[\sigma,i]\in {RHD}_{n+1,k-1}$.
\end{proof}

Let $\overline{{HD}}_{n,k-1}$ be the set of pairs $[\pi,a]$ such that $\pi\in{QD}_{n,k-1}$ and $a\in \{1,2,\ldots,n\}$.
Then $$|\overline{{HD}}_{n,k-1}|=n\widetilde{A}({n-1,k-1}).$$
Let $\overline{{RHD}}_{n+1,k-1}={HD}_{n+1,k-1}\setminus {RHD}_{n+1,k-1}$. In fact, $\overline{{RHD}}_{n+1,k-1}$ is the set of pairs $[\pi,i]$ such that $[\pi,i]\in{HD}_{n+1,k-1}$ and $i\in\{1,2,\ldots,\pi^{-1}(n+1)-1\}$.

\begin{lem}\label{nE(n-1,k-1)-bijection}
There is a bijection $\psi=\psi_{n,k}$ from $\overline{{HD}}_{n,k-1}$ to $\overline{{RHD}}_{n+1,k-1}$.
\end{lem}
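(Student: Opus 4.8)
The plan is to realize $\psi$ as an insertion of the letter $a$ into the initial increasing run of $\pi$, with $\psi^{-1}$ given by the deletion map $\alpha$ already used above. Given $[\pi,a]\in\overline{HD}_{n,k-1}$, let $q=\pi^{-1}(n)$, so that $\pi(1)<\pi(2)<\cdots<\pi(q)=n$ is the initial ascending run and $\pi(q)=n$ is the first descent of $\pi$. Let $s$ be the number of indices $j$ with $1\le j\le q$ and $\pi(j)<a$ (these are exactly $\pi(1),\dots,\pi(s)$, a prefix of the run, since the run is increasing), put $\sigma=\beta_{a,s}(\pi)$, and set $i=s+1$. Thus $\sigma$ is obtained from $\pi$ by raising every entry $\ge a$ by $1$ and then inserting $a$ into the run at its sorted place, and $i=\sigma^{-1}(a)$. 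Define $\psi([\pi,a])=[\sigma,i]$.

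First I would verify that $[\sigma,i]\in\overline{RHD}_{n+1,k-1}$. Raising the entries $\ge a$ is an order-preserving relabeling, so it leaves the descent set alone and turns $n$ into $n+1$ while keeping it at position $q$, the end of the run; moreover after the relabeling no entry equals $a$. Since $a<n+1$, the entry $a$ is inserted strictly to the left of $n+1$, the run stays strictly increasing, $n+1$ moves to position $q+1$, and the descent immediately after the run is untouched; hence $\sigma^{-1}(n+1)=q+1$, the entry $n+1$ is still the first descent, and $\des(\sigma)=\des(\pi)=k-1$, so $\sigma\in QD_{n+1,k-1}$. Finally $1\le i=s+1\le q=\sigma^{-1}(n+1)-1$, which is the defining condition of $\overline{RHD}_{n+1,k-1}$ (this also forces $i\le n-k+2$, since the run of $\sigma$ contributes $\sigma^{-1}(n+1)-1$ ascents, so $\sigma^{-1}(n+1)-1\le\asc(\sigma)=n-k+1$).

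For the inverse, given $[\sigma,i]\in\overline{RHD}_{n+1,k-1}$ put $a=\sigma(i)$ and $\pi=\alpha_a(\sigma)$. The condition $i\le\sigma^{-1}(n+1)-1$ places $i$ strictly inside the initial run of $\sigma$, so $a=\sigma(i)<n+1$, i.e.\ $a\in[n]$; deleting $a$ removes one entry from an increasing run, so the descent count is unchanged, and after relabeling the former maximum $n+1$ of the run becomes $n$ and stays at the end of the shortened run, hence $n$ is the first descent of $\pi$ and $\pi\in QD_{n,k-1}$. Thus $[\pi,a]\in\overline{HD}_{n,k-1}$. That $\psi$ and the map $[\sigma,i]\mapsto[\alpha_{\sigma(i)}(\sigma),\sigma(i)]$ are mutually inverse reduces to $\alpha_a\circ\beta_{a,s}=\mathrm{id}$ together with the remark that the run of $\alpha_a(\sigma)$ has exactly $i-1$ entries smaller than $a$ (namely the relabeled $\sigma(1),\dots,\sigma(i-1)$), so that $\psi$ reinserts $a$ at precisely position $i$.

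The identities $\alpha_a\circ\beta_{a,s}=\mathrm{id}$ and its converse are routine relabeling bookkeeping; likewise the verifications that inserting $a$ into the run neither spoils the ``$n+1$ is the first descent'' condition nor changes the descent count. The one point that has to be watched is the placement of $a$: it must land strictly to the left of $n+1$ so that the resulting index satisfies $i\le\sigma^{-1}(n+1)-1$ rather than merely $i\le\sigma^{-1}(n+1)$ -- this is exactly what distinguishes $\overline{RHD}_{n+1,k-1}$ inside $HD_{n+1,k-1}$ -- and it is automatic because $a\in[n]$ while the run ends in $n+1$. As a sanity check, for $n=2$, $k-1=1$ one has $\overline{HD}_{2,1}=\{[21,1],[21,2]\}$, and $\psi$ sends these to $[132,1]$ and $[231,1]$, which are precisely the two elements of $\overline{RHD}_{3,1}$.
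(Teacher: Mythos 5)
Your construction is exactly the paper's: you insert $a$ into the initial increasing run of $\pi$ at its sorted position $s$ via $\beta_{a,s}$ and record the pair $[\beta_{a,s}(\pi),s+1]$, with inverse $[\sigma,i]\mapsto[\alpha_{\sigma(i)}(\sigma),\sigma(i)]$, which is precisely the map $\psi$ in the paper (your $s$ is the paper's index $i$ determined by $\sigma(i)<a\leq\sigma(i+1)$). The proof is correct and in fact verifies the membership conditions in somewhat more detail than the paper does.
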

\begin{proof}
For any $[\sigma,a]\in \overline{{HD}}_{n,k-1}$, suppose $p=\sigma^{-1}(n)$, then $0=\sigma(0)<\sigma(1)<\sigma(2)<\ldots<\sigma(p)=n$ since the entry $n$ appears as the first descent of $\sigma$ from left to right. There exists a unique index $i\in\{0,1,\ldots,p-1\}$ such that $\sigma(i)<a\leq \sigma(i+1)$ since $a\in\{1,2,\ldots,n\}$.
Then $\beta_{a,i}(\sigma)\in QD_{n+1,k-1}$ and $[\beta_{a,i}(\sigma),i+1]\in \overline{{RHD}}_{n+1,k-1}$. Define a map $\psi:\overline{{HD}}_{n,k-1}\mapsto \overline{{RHD}}_{n+1,k-1}$ by letting $\psi(\sigma,a)=[\beta_{a,i}(\sigma),i+1]$.

Conversely, for any $[\sigma,i]\in\overline{{RH}}_{n+1,k-1}$, suppose $a=\sigma(i)$, then $a\in \{1,2,\ldots,n\}$ since the entry $n+1$ appears as the first descent of $\sigma$ from left to right and $i<\sigma^{-1}(n+1)$.
Moreover, $\alpha_{a}(\sigma)\in {QD}_{n,k-1}$ and $\alpha_{a}(\sigma)(i-1)<a\leq \alpha_{a}(\sigma)(i)$. The inverse $\psi^{-1}$ of the map $\psi$ is $$\psi^{-1}(\sigma,i)=[\alpha_a(\sigma),a].$$

\end{proof}

\noindent{\it The proof of the recurrence relation~\eqref{Enk-recurrence}:}

Note that $${QD}_{n+2,k}={RD}_{n+2,k}\cup \overline{{RD}}_{n+2,k}.$$
So $$\widetilde{A}(n+1,k)=|{QD}_{n+2,k}|=|{RD}_{n+2,k}|+ |\overline{{RD}}_{n+2,k}|.$$
Lemma~\ref{(k+1)E(n,k)-bijection} implies that $|{RD}_{n+2,k}|=|{FD}_{n+1,k}|=(k+1)\widetilde{A}(n,k)$.
Lemmas~\ref{bar-D(n+2,k-1)-bijection} and~\ref{nE(n-1,k-1)-bijection} tell us that
\begin{eqnarray*}|\overline{{RD}}_{n+2,k}|&=&|{RHD}_{n+1,k-1}|\\
&=&|{HD}_{n+1,k-1}|-|\overline{{RHD}}_{n+1,k-1}|\\
&=&|{HD}_{n+1,k-1}|-|\overline{{HD}}_{n,k-1}|\\
&=&(n-k+2)\widetilde{A}_{n,k-1}-n\widetilde{A}_{n-1,k-1}.\end{eqnarray*}
Hence, $\widetilde{A}(n+1,k)=(k+1)\widetilde{A}(n,k)+(n-k+2)\widetilde{A}(n,k-1)-n\widetilde{A}(n-1,k-1).$\hspace{2cm}$\square$
\begin{cor}\label{cor-zeros}
The polynomials $\widetilde{A}_n(x)$ satisfy the recurrence relation
\begin{equation*}\label{Anx-recurrence}
\widetilde{A}_{n+1}(x)=\left(1+(n+1)x\right)\widetilde{A}_n(x)+x(1-x)\widetilde{A}_n'(x)-nx\widetilde{A}_{n-1}(x),
\end{equation*}
with the initial value $\widetilde{A}_0(x)=1$.
\end{cor}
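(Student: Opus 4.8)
The plan is to obtain the polynomial recurrence simply by multiplying the coefficient recurrence~\eqref{Enk-recurrence} of Theorem~\ref{thm01} by $x^k$ and summing over all integers $k$, with the convention $\widetilde{A}(n,k)=0$ for $k<0$ or $k>n$.

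First I would record the elementary dictionary between coefficient operations and operations on $\widetilde{A}_n(x)=\sum_k\widetilde{A}(n,k)x^k$: one has $\sum_k k\,\widetilde{A}(n,k)x^k=x\widetilde{A}_n'(x)$, hence $\sum_k(k+1)\widetilde{A}(n,k)x^k=\widetilde{A}_n(x)+x\widetilde{A}_n'(x)$; after the shift $j=k-1$ one gets $\sum_k(n-k+2)\widetilde{A}(n,k-1)x^k=(n+1)x\widetilde{A}_n(x)-x^2\widetilde{A}_n'(x)$; and $\sum_k\widetilde{A}(n-1,k-1)x^k=x\widetilde{A}_{n-1}(x)$. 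The only point requiring a moment's care is that no boundary term is dropped in the middle shift, which holds because $\widetilde{A}(n,j)$ is supported on $0\le j\le n$.

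Next I would substitute these three evaluations into $\widetilde{A}_{n+1}(x)=\sum_k\widetilde{A}(n+1,k)x^k$ and collect terms: the $\widetilde{A}_n'(x)$ contributions combine to $x(1-x)\widetilde{A}_n'(x)$ and the $\widetilde{A}_n(x)$ contributions to $\bigl(1+(n+1)x\bigr)\widetilde{A}_n(x)$, producing the asserted identity for $n\ge1$. For the remaining instance $n=0$ I would just verify directly that $(1+x)\widetilde{A}_0(x)+x(1-x)\widetilde{A}_0'(x)=1+x=\widetilde{A}_1(x)$, and note that $\widetilde{A}_0(x)=1$ is the stated initial value.

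No serious obstacle is expected here; the computation is purely formal once Theorem~\ref{thm01} is in hand, the only subtleties being the index bookkeeping in the shifted sum and the fact that~\eqref{Enk-recurrence} is proved only for $n\ge1$, so the base case $n=0$ must be checked separately. As an alternative I could extract the coefficient of $z^n/n!$ directly from the functional equation~\eqref{Euler-recurrence02}, using that the operators $z\,\partial_z$ and multiplication by $z$ act on $z^n/n!$ by the scalar $n$ and the index shift $n\mapsto n-1$ respectively; this yields the same recurrence and might be preferred for brevity, but the coefficient route above is self-contained given Theorem~\ref{thm01}.
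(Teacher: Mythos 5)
Your proposal is correct and is essentially the paper's intended derivation: Corollary~\ref{cor-zeros} is stated as an immediate consequence of Theorem~\ref{thm01}, obtained exactly by multiplying~\eqref{Enk-recurrence} by $x^k$ and summing over $k$ (equivalently, by extracting the coefficient of $z^n/n!$ from~\eqref{Euler-recurrence02}). Your index bookkeeping and the separate check of the $n=0$ case are both sound.
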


Based on empirical evidence, we propose the following conjecture.
\begin{conj}
For any $n\geq 1$, the polynomial $\widetilde{A}_n(x)$ has only real zeros.
\end{conj}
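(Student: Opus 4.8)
The plan is to prove the conjecture by induction on $n$, using the three-term recurrence of Corollary~\ref{cor-zeros} as the engine. Since $\widetilde{A}_n(x)$ has positive coefficients, every real zero is negative, and since $\widetilde{A}_n(0)=\widetilde{A}(n,0)=1>0$ with positive leading coefficient, the natural inductive hypothesis to carry is the strong interlacing statement: $\widetilde{A}_{n-1}(x)$ and $\widetilde{A}_n(x)$ each have only simple negative zeros, and these interlace, say $x_1<y_1<x_2<\cdots<x_{n-1}<y_{n-1}<x_n<0$, where $x_1<\cdots<x_n$ are the zeros of $\widetilde{A}_n$ and $y_1<\cdots<y_{n-1}$ those of $\widetilde{A}_{n-1}$. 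First I would evaluate the recurrence
\begin{equation*}
\widetilde{A}_{n+1}(x)=\bigl(1+(n+1)x\bigr)\widetilde{A}_n(x)+x(1-x)\widetilde{A}_n'(x)-nx\,\widetilde{A}_{n-1}(x)
\end{equation*}
at each zero $x_j$ of $\widetilde{A}_n$, where the first summand drops out, leaving
\begin{equation*}
\widetilde{A}_{n+1}(x_j)=x_j(1-x_j)\,\widetilde{A}_n'(x_j)-n\,x_j\,\widetilde{A}_{n-1}(x_j).
\end{equation*}
The goal is to show that $\widetilde{A}_{n+1}(x_j)$ changes sign $n-1$ times across $x_1,\ldots,x_n$, producing $n-1$ interior zeros; the two remaining zeros are then located in $(-\infty,x_1)$ and in $(x_n,0)$ by inspecting the signs as $x\to-\infty$ and at $0^-$ together with $\widetilde{A}_{n+1}(0)=1$.

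The main obstacle is precisely the negative term $-nx\,\widetilde{A}_{n-1}(x)$, which I expect to fight the derivative term rather than reinforce it. Indeed, at a simple zero $x_j<0$ one has $\sgn\widetilde{A}_n'(x_j)=(-1)^{n-j}$ and $x_j(1-x_j)<0$, so the first summand carries sign $(-1)^{n-j+1}$; the interlacing hypothesis forces $\sgn\widetilde{A}_{n-1}(x_j)=(-1)^{n-j}$, while $-nx_j>0$, so the second summand carries the \emph{opposite} sign $(-1)^{n-j}$. Thus the desired sign alternation of $\widetilde{A}_{n+1}(x_j)$ cannot be read off from the signs alone; it hinges on which summand dominates. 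The crux is therefore to upgrade the inductive hypothesis to a \emph{quantitative} interlacing guaranteeing that one of the two summands dominates uniformly in $j$—for instance a lower bound on $|x_j(1-x_j)\widetilde{A}_n'(x_j)|$ relative to $n|x_j|\,|\widetilde{A}_{n-1}(x_j)|$—so that $\sgn\widetilde{A}_{n+1}(x_j)$ becomes a single alternating sequence, either $(-1)^{n-j+1}$ or $(-1)^{n-j}$ for all $j$. Propagating such a magnitude estimate through the induction, in tandem with the interlacing it must itself imply for the next step, is the delicate point and the place where I expect the real work (and the risk of failure) to lie.

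As a more robust alternative, I would exploit the $\gamma$-positivity recorded in the introduction. Writing $\widetilde{A}_n(x)=\sum_{i=0}^{\lrf{n/2}}\gamma_{n,i}\,x^i(1+x)^{n-2i}$ with $\gamma_{n,i}\ge0$ and setting $\gamma_n(t)=\sum_{i}\gamma_{n,i}t^i$, one has the identity $\widetilde{A}_n(x)=(1+x)^n\gamma_n\!\bigl(x/(1+x)^2\bigr)$. Since $t=x/(1+x)^2$ has derivative $(1-x)/(1+x)^3$ and hence maps each of the intervals $(-\infty,-1)$ and $(-1,0)$ monotonically and bijectively onto $(-\infty,0)$, every negative zero of $\gamma_n$ yields two negative zeros of $\widetilde{A}_n$ (with the factor $(1+x)^n$ supplying the extra zero at $-1$ when $n$ is odd); hence real-rootedness of $\gamma_n(t)$ implies real-rootedness of $\widetilde{A}_n(x)$. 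This reduces the conjecture to proving that the $\gamma$-polynomials are themselves real-rooted, and the advantage is that $\gamma$-vectors of such polytopal $h$-polynomials typically enumerate a combinatorial family and satisfy a recurrence \emph{with nonnegative coefficients}. The plan here is to derive such a recurrence for $\gamma_n(t)$—either from the Shareshian--Wachs description~\cite{Shareshian} or by translating Corollary~\ref{cor-zeros} through the substitution above—and then apply the standard interlacing machinery, which goes through cleanly once no negative term is present. The hard part becomes establishing that clean recurrence; should it carry a derivative term but no negative term, the classical fact that an operator of the form $p\mapsto a(x)p+b(x)p'$ preserves real-rootedness would finish the proof.
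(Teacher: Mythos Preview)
The paper does not prove this statement: it is presented as a conjecture ``based on empirical evidence'' and left open. There is therefore no proof in the paper against which to compare your attempt.

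Your proposal, in turn, is a research plan rather than a proof, and you are candid about this. In the first approach you correctly compute that at each zero $x_j$ of $\widetilde{A}_n$ the two surviving summands $x_j(1-x_j)\widetilde{A}_n'(x_j)$ and $-nx_j\widetilde{A}_{n-1}(x_j)$ carry opposite signs, so the usual sign-change argument is blocked unless one can show a dominance inequality; you do not supply one, and I do not see an obvious way to obtain such a quantitative interlacing hypothesis that propagates through the induction. This is a genuine obstruction, not a technicality.

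The second approach is a valid reduction: your analysis of the map $t=x/(1+x)^2$ is correct, and real-rootedness of $\gamma_n(t)$ would indeed imply real-rootedness of $\widetilde{A}_n(x)$. But you have only traded one open question for another. Translating Corollary~\ref{cor-zeros} through the substitution $x\mapsto x/(1+x)^2$ is messy because of the chain rule acting on $\widetilde{A}_n'$, and there is no a priori reason the resulting recurrence for $\gamma_n$ should be free of negative terms; the hope that ``$\gamma$-vectors of such polytopal $h$-polynomials typically \ldots\ satisfy a recurrence with nonnegative coefficients'' is not substantiated here. So neither route, as written, closes the gap.
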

\subsection{The ascent statistic on $\Q_{n}$}
\hspace*{\parindent}

\begin{thm}\label{equidistributed}
We have $\widetilde{A}(n,k)=|\{\pi\in\Q_{n+1}: \asc(\pi)=k\}|$.
\end{thm}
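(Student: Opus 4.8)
The plan is to show that the ascent statistic on $\Q_{n+1}$ has the same distribution as the descent statistic on $\Q_{n+1}$, i.e. that $|\{\pi\in\Q_{n+1}:\asc(\pi)=k\}|=\widetilde{A}(n,k)=|QD_{n+1,k}|$. The cleanest route is to mirror the three-step argument of Section~2.1, but carried out with ascents in place of descents. Concretely, I would define, for the ascent version, the analogue of $RD_{n+2,k}$ as the set of permutations $\pi$ of $[n+2]$ in which $n+2$ appears as the first descent, $\pi$ has $k$ ascents among the positions after the position of $n+2$ (equivalently, I would track ascents of the tail and count them appropriately), and a local condition at $a=\pi^{-1}(1)$; likewise for $\overline{RD}_{n+2,k}$. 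One then seeks bijections paralleling Lemmas~\ref{(k+1)E(n,k)-bijection}, \ref{bar-D(n+2,k-1)-bijection} and~\ref{nE(n-1,k-1)-bijection}, using the same insertion/deletion maps $\alpha_i$ and $\beta_{i,j}$, to obtain a recurrence for $\widehat{A}(n,k):=|\{\pi\in\Q_{n+1}:\asc(\pi)=k\}|$ that matches~\eqref{Enk-recurrence}.

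A slicker alternative, which I would actually prefer, is to build a single explicit bijection $\Phi:\Q_{n+1}\to\Q_{n+1}$ with $\asc(\Phi(\pi))=\des(\pi)$, thereby directly answering Problem~\ref{problem}. The idea is to leverage the maps already constructed: Lemma~\ref{(k+1)E(n,k)-bijection} relates a permutation in $\Q_{n+1}$ together with a marked descent-or-zero position to a permutation of $[n+2]$ whose first descent is $n+2$ and whose entry $1$ sits in a ``descent-bottom'' configuration, while Lemma~\ref{bar-D(n+2,k-1)-bijection} does the analogous thing on the ascent side. Composing $\phi$ with (the ascent analogue of) $\theta^{-1}$, after suitably matching up the marked positions, should transport the descent statistic on $\Q_{n+1}$ to the ascent statistic on $\Q_{n+1}$. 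One must check that the marker sets line up: a permutation in $QD_{n+1,k}$ carries $k+1$ marked positions $\Des^*$, while on the ascent side $\Asc^*$ has $n+1-k$ positions, so the combinatorial bookkeeping of which permutation of $[n+2]$ is hit must be done with care, and one likely needs to pass through the full set $QD_{n+2,k}=RD_{n+2,k}\cup\overline{RD}_{n+2,k}$ and its ascent counterpart, rather than matching the two pieces separately.

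The main obstacle will be the discrepancy in cardinalities of the marker sets $\Des^*$ versus $\Asc^*$: the descent recurrence~\eqref{Enk-recurrence} splits $(k+1)\widetilde{A}(n,k)$ off cleanly via a $(k+1)$-fold marking, but the natural ascent decomposition marks $n+1-k$ positions, so the two constructive proofs cannot be glued fibrewise in the obvious way. Overcoming this requires observing that both constructions ultimately compute $|QD_{n+2,k}|$ — the descent one from the ``descent side'' of the position of $1$ and the ascent one from the ``ascent side'' — and then using the common target $QD_{n+2,k}$ (whose statistic is genuinely the descent count, since $n+2$ is always the first descent) as the pivot. Once the two families of bijections are seen to parametrize the same set, the equality $\widehat{A}(n,k)=\widetilde{A}(n,k)$ follows by induction on $n$ from matching recurrences, with the base case $n=0$ trivial. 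I would also sanity-check the claim against the listed values $\widetilde{A}_3(x)=1+7x+7x^2+x^3$ by directly computing ascents on $\Q_4$.
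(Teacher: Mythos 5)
Your first plan is exactly the paper's proof: it defines ascent counterparts of the descent-side sets, mirrors the three bijections to show that $\widetilde{B}(n,k):=|\{\pi\in\Q_{n+1}:\asc(\pi)=k\}|$ satisfies the same recurrence \eqref{Enk-recurrence}, and concludes by induction from matching initial values. The one detail to flag is that the two pieces swap roles on the ascent side (the ``descent-bottom at $1$'' piece contributes the $(n-k+2)\widetilde{B}(n,k-1)$ term, since deleting the entry $1$ there lowers the ascent count by one, while the other piece yields $(k+1)\widetilde{B}(n,k)-n\widetilde{B}(n-1,k-1)$); your ``slicker alternative'' of a single bijection is precisely what the paper assembles afterwards in Theorem~\ref{equidistributed-bijection}.
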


Along the same lines of the proof of Theorem~\ref{thm01}, we shall present a constructive proof of Theorem~\ref{equidistributed}.

For any $n\geq 1$ and $\pi\in\mathfrak{S}_{n}$,
we define $$\Asc^*(\pi)=\{n\}\cup \Asc(\pi),$$
$${QA}_{n,k}=\{\pi\in \mathcal{Q}_{n}\mid \asc(\pi)=k\}.$$

Suppose  that the number of permutations in $\mathcal{Q}_{n+1}$ with $k$ ascents is $\widetilde{B}(n,k)$.
Let ${HA}_{n+1,k-1}$ be the set of pairs $[\pi,i]$ such that $\pi\in{QA}_{n+1,k-1}$ and $i\in \{1,2,\ldots,n-k+2\}$.
Then $$|{HA}_{n+1,k-1}|=(n-k+2)\widetilde{B}({n,k-1}).$$

We use ${{RA}}_{n+2,k}$ to denote the set of permutations $\pi$ of $[n+2]$ which satisfy the following three conditions:
\begin{enumerate}[(1)]
\item the entry $n+2$ appears as the first descent of $\pi$ from left to right;
\item $\pi$ has $k$ ascents;
\item Either $a=1$ or $\pi(a-1)>\pi(a+1)$, where $a=\pi^{-1}(1)$.
\end{enumerate}
\begin{lem}\label{asc-bar-D(n+2,k-1)-bijection}
There is a bijection $\hat{\theta}=\hat{\theta}_{n,k}$ from ${{RA}}_{n+2,k}$ to ${HA}_{n+1,k-1}$.
\end{lem}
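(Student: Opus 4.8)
The plan is to construct $\hat\theta$ in exact parallel with the bijection $\theta$ of Lemma~\ref{bar-D(n+2,k-1)-bijection}, with the roles of ascents and descents interchanged: given $\pi\in {RA}_{n+2,k}$ we delete the entry $1$ and renormalize to land in $\mathcal{Q}_{n+1}$, recording in a single index the position from which $1$ was removed. Concretely, set $a=\pi^{-1}(1)$ and $\sigma=\alpha_1(\pi)$; list $\Des^*(\sigma)=\{j_1<j_2<\cdots<j_{n-k+2}\}$ in increasing order (so $j_1=0$), let $i$ be the index with $j_i=a-1$, and put $\hat\theta(\pi)=[\sigma,i]$.

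The first task is to check that $\sigma\in {QA}_{n+1,k-1}$ and that this index makes sense. Condition~(1) supplies a prefix $\pi(1)<\cdots<\pi(p)=n+2$; since $1$ is the least entry, either $a=1$ or $a>p$, and in both cases the entry $n+1$ (the former $n+2$) still forms the first descent of $\sigma$. Next, $a=n+2$ is impossible, since then condition~(3) fails, so $1\le a\le n+1$. If $a=1$, deleting $\pi(1)=1$ destroys one ascent; if $1<a\le n+1$, the descent at position $a-1$ and the ascent at position $a$ of $\pi$ merge, in $\sigma$, into a single adjacency which is a descent precisely because condition~(3) forces $\pi(a-1)>\pi(a+1)$. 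In either case $\asc(\sigma)=k-1$, hence $\des(\sigma)=n-k+1$ and $|\Des^*(\sigma)|=n-k+2$; the same local discussion shows $a-1\in\Des^*(\sigma)$ (it is $0$ when $a=1$ and a descent position of $\sigma$ otherwise), so $i\in\{1,\ldots,n-k+2\}$ and $[\sigma,i]\in {HA}_{n+1,k-1}$.

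For the inverse I would take $[\sigma,i]\in {HA}_{n+1,k-1}$, set $a=j_i$ (the $i$-th smallest element of $\Des^*(\sigma)$) and $\pi=\beta_{1,a}(\sigma)$, and verify $\pi\in {RA}_{n+2,k}$. Inserting $1$ at the front (if $a=0$) or so as to split the descent of $\sigma$ at position $a$ keeps $n+2$ as the first descent of $\pi$ --- here one uses that the prefix of $\sigma$ up to $\sigma^{-1}(n+1)$ is increasing, so no descent position of $\sigma$ lies strictly between $0$ and $\sigma^{-1}(n+1)$ --- raises the ascent count by one to $k$, and produces exactly the local pattern in condition~(3): $\pi^{-1}(1)=1$ when $a=0$, and $\pi^{-1}(1)=a+1$ with $\pi(a)>\pi(a+2)$ when $a\ge 1$, the last inequality coming from $\sigma(a)>\sigma(a+1)$. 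Since $\alpha_1$ and $\beta_{1,\cdot}$ undo one another on the entry $1$, and $i$ is recovered from $a-1=\pi^{-1}(1)-1\in\Des^*(\sigma)$, the two maps are mutually inverse, so $|{RA}_{n+2,k}|=|{HA}_{n+1,k-1}|=(n-k+2)\widetilde{B}(n,k-1)$.

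The one point that needs genuine care --- and the part I would write out in full --- is the bundle of claims that $n+2$ (resp.\ $n+1$) stays the first descent and that the ascent count changes by exactly one, in both directions; these hinge on the asymmetry built into condition~(3) and on the descent positions of $\sigma$ all lying weakly to the right of $\sigma^{-1}(n+1)$. Since this mirrors the already-proved Lemma~\ref{bar-D(n+2,k-1)-bijection} up to swapping the ascent and descent statistics, I anticipate no real obstacle beyond keeping the boundary positions $0$ and $n+2$ straight.
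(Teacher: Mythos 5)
Your proposal is correct and follows essentially the same route as the paper's proof: delete the entry $1$ via $\alpha_1$, record the index of $a-1=\pi^{-1}(1)-1$ in $\Des^*(\sigma)$, and invert with $\beta_{1,a}$; you merely supply the verifications the paper leaves implicit and order $\Des^*(\sigma)$ with $0$ first rather than last, which is immaterial since the second coordinate of ${HA}_{n+1,k-1}$ ranges freely over $\{1,\ldots,n-k+2\}$.
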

\begin{proof}
For any $\pi\in{{RA}}_{n+2,k}$, let $a=\pi^{-1}(1)$ and $\sigma=\alpha_1(\pi)$. Clearly, $\sigma\in{{QA}}_{n+1,k-1}$ and $\des(\sigma)=n-k+1$. Suppose that $$\Des^{*}(\sigma)=\{j_1,j_2,\ldots,j_{n-k+2}\}$$ with $j_1<j_2<\ldots<j_{n-k+1}$ and $j_{n-k+2}=0$. Note that $a-1\in \Des^{*}(\sigma)$. Suppose that $j_i=a-1$ for some $i$. Define a map $\hat{\theta}:{{RA}}_{n+2,k}\mapsto {HA}_{n+1,k-1}$ by letting $\hat{\theta}(\pi)=[\alpha_1(\pi),i]$.

Conversely, for any $[\sigma,i]\in {HA}_{n+1,k-1}$, we have $\des(\sigma)=n-k+1$. Suppose that $$\Des^{*}(\sigma)=\{j_1,j_2,\ldots,j_{n-k+2}\}$$ with $j_1<j_2<\ldots<j_{n-k+1}$,  $j_{n-k+2}=0$ and $a=j_i$. Let us consider the permutation $\pi=\beta_{1,a}(\sigma)$. Then $\pi^{-1}(1)=1$ if $a=0$; otherwise, $\pi(a+1)=1$ and $\pi(a)>\pi(a+2)$ since $\sigma(a)>\sigma(a+1)$. Hence $\pi\in{{RA}}_{n+2,k}$. Therefore, the inverse $\hat{\theta}^{-1}$ of the map $\hat{\theta}$ is $$\hat{\theta}^{-1}(\sigma,i)=\beta_{1,a}(\sigma)$$ for any $[\sigma,i]\in {HA}_{n+1,k-1}$.
\end{proof}

Denote by ${FA}_{n+1,k}$ the set of pairs $[\pi,i]$ such that $\pi\in{QA}_{n+1,k}$ and $i\in \{1,\ldots,k\}\cup\{n+1\}$. Hence $$|{FA}_{n+1,k}|=(k+1)\widetilde{B}(n,k).$$
 Let ${{RFA}}_{n+1,k}$ be the set of pairs $[\pi,i]$ in ${FA}_{n+1,k}$ such that $i>\pi^{-1}(n+1)-1$.
Use $\overline{{RA}}_{n+2,k}$ to denote the set of permutations $\pi$ of $[n+2]$ which satisfy the following three conditions:
\begin{enumerate}[(1)]
\item the entry $n+2$ appears as the first descent of $\pi$ from left to right;
\item $\pi$ has $k$ ascents;
\item Either $a=n+2$ or $\pi(a-1)<\pi(a+1)$, where $a=\pi^{-1}(1)$.
\end{enumerate}

\begin{lem}\label{asc-(k+1)E(n,k)-bijection}
There is a bijection $\hat{\phi}=\hat{\phi}_{n,k}$ from ${\overline{{RA}}}_{n+2,k}$ to ${RFA}_{n+1,k}$.
\end{lem}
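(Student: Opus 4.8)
The plan is to mimic the proof of Lemma~\ref{(k+1)E(n,k)-bijection}, using the maps $\alpha_1$ and $\beta_{1,a}$ and the ascent version $\Asc^*$ of the distinguished-position set, with the extra wrinkle that we must keep track of the constraint $i>\pi^{-1}(n+1)-1$ that carves out $RFA$ inside $FA$. First I would take $\pi\in\overline{RA}_{n+2,k}$, set $a=\pi^{-1}(1)$ and $\sigma=\alpha_1(\pi)$; deleting the entry $1$ does not change the number of ascents among the surviving adjacencies except that the pair straddling the old position of $1$ merges, and since condition~(3) guarantees $\pi(a-1)<\pi(a+1)$ (or $a=n+2$), the merge preserves one ascent, so $\sigma\in QA_{n+1,k}$. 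The position $a-1$ becomes either an ascent of $\sigma$ or the sentinel position $n+1$ (when $a=n+2$), i.e. $a-1\in\Asc^*(\sigma)$; writing $\Asc^*(\sigma)=\{j_1<\cdots<j_{k}\}\cup\{n+1\}$ and letting $i$ be the index with $j_i=a-1$ (with $i=n+1$ shorthand for the sentinel), I would set $\hat\phi(\pi)=[\sigma,i]$. One then checks $i\in\{1,\dots,k\}\cup\{n+1\}$ and $i>\sigma^{-1}(n+1)-1$: the latter is exactly the translation of ``the entry $n+2$ is the first descent of $\pi$'' together with $\pi(a+1)=1$ lying after $n+2$, so that the insertion position $a$ lies to the right of $\sigma^{-1}(n+1)$.

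For the inverse, given $[\sigma,i]\in RFA_{n+1,k}$, I would recover $a=j_i$ from $\Asc^*(\sigma)$ and set $\pi=\beta_{1,a}(\sigma)$: increase every entry of $\sigma$ by $1$ and insert $1$ right after position $a$. If $a=n+1$ (the sentinel) then $1$ goes at the very end, giving $\pi^{-1}(1)=n+2$; otherwise $\pi(a+1)=1$ and, because $j_i=a$ was an ascent of $\sigma$, we have $\pi(a)<\pi(a+2)$, which is precisely condition~(3) for membership in $\overline{RA}_{n+2,k}$. The constraint $i>\sigma^{-1}(n+1)-1$ ensures the inserted $1$ lands strictly after the (shifted) entry $n+2$, so $n+2$ is still the first descent of $\pi$; and $\beta_{1,a}$ leaves the ascent count at $k$ by the same merge/split bookkeeping as before. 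Since $\hat\phi$ and $\beta_{1,\cdot}$ are mutually inverse by construction, this establishes the bijection.

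The main obstacle I anticipate is the careful verification that the auxiliary inequality $i>\pi^{-1}(n+1)-1$ transfers correctly in both directions, since this is the one place the ascent-side argument genuinely differs from Lemma~\ref{(k+1)E(n,k)-bijection} rather than being a verbatim dualization; in particular one must be precise about how the relabelling in $\alpha_1$ and $\beta_{1,a}$ shifts the position of the entry $n+1$ versus $n+2$, and about the degenerate cases $a=n+2$ (resp. the sentinel $i=n+1$) where the ``adjacency straddling $1$'' argument has to be replaced by a direct check that $1$ sits at the end. Everything else is the same ascent/descent adjacency bookkeeping already used in the preceding lemmas.
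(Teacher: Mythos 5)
Your proposal is correct and follows essentially the same route as the paper: delete the entry $1$ via $\alpha_1$, record which element of $\Asc^*(\sigma)$ the vacated position corresponds to, and invert with $\beta_{1,a}$, checking that condition (3) and the constraint $i>\sigma^{-1}(n+1)-1$ transfer in both directions. The only (harmless) differences are a notational slip in the forward direction (there $\pi(a)=1$, not $\pi(a+1)=1$) and your indexing of $\Asc^*(\sigma)$ by $\{1,\ldots,k\}\cup\{n+1\}$ rather than the paper's $\{j_0,\ldots,j_k\}$, which if anything matches the stated definition of $FA_{n+1,k}$ more faithfully.
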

\begin{proof}
For any $\pi\in\overline{{RA}}_{n+2,k}$, let $a=\pi^{-1}(1)$ and $\sigma=\alpha_1(\pi)$.  Clearly, $\sigma\in{QA}_{n+1,k}$. Suppose that $$\Asc^{*}(\sigma)=\{j_0,j_1,\ldots,j_{k}\}$$ with $j_0<j_1<\ldots<j_{k-1}<j_{k}=n+1$. Note that $a-1\in \Asc^{*}(\sigma)$. Moreover, suppose that $j_i=a-1$ form some $i$. Then $i>\sigma^{-1}(n+1)-1$ since $a>\pi^{-1}(n+2)$. Define a map $\hat{\phi}:\overline{{RA}}_{n+2,k}\mapsto {RFA}_{n+1,k}$ by letting $\hat{\phi}(\pi)=[\alpha_1(\pi),i]$.

Conversely, for any $[\sigma,i]\in {RFA}_{n+1,k}$, suppose that $$\Asc^{*}(\sigma)=\{j_0,j_1,\ldots,j_{k}\}$$ with $j_0<j_1<\ldots<j_{k-1}< j_{k}=n+1$ and $a=j_i$. Let us consider the permutation $\pi=\beta_{1,a}(\sigma)$. Then $\pi(n+2)=1$ if $a=n+1$; otherwise, $\pi(a+1)=1$ and $\pi(a)<\pi(a+2)$ since $\sigma(a)<\sigma(a+1)$. So, $\pi\in\overline{{RA}}_{n+2,k}$ since $i>\sigma^{-1}(n+1)-1$. Thus, for any $[\sigma,i]\in {RFA}_{n+1,k}$, the inverse $\hat{\phi}^{-1}$ of the map $\hat{\phi}$ is given by $\hat{\phi}^{-1}(\sigma,i)=\beta_{1,a}(\sigma)$.
\end{proof}

Let $\overline{{HA}}_{n,k-1}$ be the set of pairs $[\pi,a]$ such that $\pi\in{QA}_{n,k-1}$ and $a\in \{1,2,\ldots,n\}$.
Then $$|\overline{{HA}}_{n,k-1}|=n\widetilde{B}({n-1,k-1}).$$
Let $\overline{{RFA}}_{n+1,k}={FA}_{n+1,k}\setminus {RFA}_{n+1,k}$. Note that $\overline{{RFA}}_{n+1,k}$ is the set of pairs $[\pi,i]$ such that $[\pi,i]\in{FA}_{n+1,k}$ and $i\in\{1,2,\ldots, \pi^{-1}(n+1)-1\}$.

\begin{lem}\label{asc-nE(n-1,k-1)-bijection}
There is a bijection $\hat{\psi}=\hat{\psi}_{n,k}$ from $\overline{{HA}}_{n,k-1}$ to $\overline{{RFA}}_{n+1,k}$.
\end{lem}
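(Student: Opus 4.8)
The plan is to follow the template of Lemma~\ref{nE(n-1,k-1)-bijection} almost verbatim, with ascents playing the role of descents, and using the very same insertion operator $\beta_{a,i}$ and deletion operator $\alpha_a$. Given $[\sigma,a]\in\overline{{HA}}_{n,k-1}$, I would first locate the increasing prefix of $\sigma$: since the entry $n$ is the first descent, $0=\sigma(0)<\sigma(1)<\cdots<\sigma(p)=n$ where $p=\sigma^{-1}(n)$, and hence there is a unique index $i\in\{0,1,\ldots,p-1\}$ with $\sigma(i)<a\leq\sigma(i+1)$. Then I set $\pi=\beta_{a,i}(\sigma)$ and declare $\hat{\psi}(\sigma,a)=[\pi,i+1]$. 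The candidate inverse is: for $[\sigma,i]\in\overline{{RFA}}_{n+1,k}$, put $a=\sigma(i)$ and return $[\alpha_a(\sigma),a]$.

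The first batch of verifications is that $\hat{\psi}$ is well defined, i.e., $[\pi,i+1]\in\overline{{RFA}}_{n+1,k}$. Inserting $a$ into the increasing prefix of $\sigma$ (after bumping entries $\geq a$ up by one) keeps that prefix increasing, so $n+1$ — the image of $n$ — remains the first descent of $\pi$ and $\pi\in\Q_{n+1}$; moreover this insertion neither creates nor destroys a descent, so $\des(\pi)=\des(\sigma)$, and therefore $\asc(\pi)=(n+1)-1-\des(\pi)=\asc(\sigma)+1=k$, giving $\pi\in{QA}_{n+1,k}$. For the second coordinate, the key point is that the prefix $\sigma(1)<\cdots<\sigma(p)$ already consumes $p-1$ ascents, so $p-1\leq\asc(\sigma)=k-1$, i.e., $p\leq k$; since $\pi^{-1}(n+1)=p+1$, we obtain $1\leq i+1\leq p=\pi^{-1}(n+1)-1\leq k$, which places $[\pi,i+1]$ in $\overline{{RFA}}_{n+1,k}$ (and in particular inside the index set $\{1,\ldots,k\}\cup\{n+1\}$ allowed by ${FA}_{n+1,k}$).

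For the inverse I would argue symmetrically: if $[\sigma,i]\in\overline{{RFA}}_{n+1,k}$, then $i<\sigma^{-1}(n+1)$, so $a=\sigma(i)$ sits in the increasing prefix before the entry $n+1$ and hence $a\in\{1,\ldots,n\}$; applying $\alpha_a$ deletes $a$ from that prefix and sends $n+1$ to $n$, so $\alpha_a(\sigma)\in\Q_n$, and again $\des$ is unchanged, whence $\asc(\alpha_a(\sigma))=k-1$ and $[\alpha_a(\sigma),a]\in\overline{{HA}}_{n,k-1}$. Finally I check the two compositions: $\alpha_a\circ\beta_{a,i}$ is the identity, while in $\tau=\alpha_a(\sigma)$ one has $\tau(i-1)=\sigma(i-1)<a\leq\sigma(i+1)-1=\tau(i)$, so the slot recovered for $a$ in $\tau$ is $i-1$ and $\beta_{a,i-1}(\tau)=\sigma$ with second coordinate $(i-1)+1=i$; thus $\hat{\psi}$ and $[\sigma,i]\mapsto[\sigma(i)\text{-th inverse step}]$ are mutually inverse.

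I expect the only genuine difficulty to be the bookkeeping: verifying in both directions that $\beta_{a,i}$ and $\alpha_a$ preserve the increasing prefix ending at the maximal entry, that each shifts $\asc$ by exactly one, and — most delicately — that the inequality $p\leq k$ forces the inserted slot index $i+1$ into the range $\{1,\ldots,k\}$ permitted by ${FA}_{n+1,k}$. This last point is the ascent-side counterpart of the constraint handled in Lemma~\ref{nE(n-1,k-1)-bijection}, and it is precisely what makes $\overline{{RFA}}_{n+1,k}$, rather than all of ${FA}_{n+1,k}$, the correct codomain; everything else is a routine transcription of the earlier argument.
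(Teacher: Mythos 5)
Your proposal is correct and is essentially the paper's own proof: the same insertion map $\hat{\psi}(\sigma,a)=[\beta_{a,i}(\sigma),i+1]$ with $i$ determined by $\sigma(i)<a\leq\sigma(i+1)$ in the increasing prefix, and the same inverse $[\sigma,i]\mapsto[\alpha_{\sigma(i)}(\sigma),\sigma(i)]$. In fact you supply a detail the paper leaves implicit, namely that $p-1\leq\asc(\sigma)=k-1$ forces $i+1\leq p\leq k$, so the image index lands in the range $\{1,\ldots,k\}$ required by ${FA}_{n+1,k}$.
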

\begin{proof}
For any $[\sigma,a]\in \overline{{HA}}_{n,k-1}$, suppose $p=\sigma^{-1}(n)$, then $0=\sigma(0)<\sigma(1)<\sigma(2)<\ldots<\sigma(p)=n$ since the entry $n$ appears as the first descent of $\sigma$ from left to right. There exists a unique index $i\in\{0,1,\ldots,p-1\}$ such that $\sigma(i)<a\leq \sigma(i+1)$ since $a\in[n]$.
Then $\beta_{a,i}(\sigma)\in {QA}_{n+1,k}$ and $[\beta_{a,i}(\sigma),i+1]\in \overline{{RFA}}_{n+1,k}$. Define a map $\hat{\psi}:\overline{{HA}}_{n,k-1}\mapsto \overline{{RFA}}_{n+1,k}$ by letting $$\hat{\psi}(\sigma,a)=[\beta_{a,i}(\sigma),i+1].$$

Conversely, for any $[\sigma,i]\in\overline{{RFA}}_{n+1,k}$, suppose $a=\sigma(i)$, then $a\in [n]$ since the entry $n+1$ appears as the first descent of $\sigma$ from left to right and $i\leq \sigma^{-1}(n+1)-1$.
Moreover, $\alpha_{a}(\sigma)\in {QA}_{n,k-1}$ and $\alpha_{a}(\sigma)(i-1)<a\leq \alpha_{a}(\sigma)(i)$. The inverse $\hat{\psi}^{-1}$ of the map $\hat{\psi}$ is $\hat{\psi}^{-1}(\sigma,i)=[\alpha_a(\sigma),a]$.

\end{proof}

\noindent{\it The proof of the theorem \ref{equidistributed}:}

Note that ${QA}_{n+2,k}={RA}_{n+2,k}\cup \overline{{RA}}_{n+2,k}$.
So $\widetilde{B}(n+1,k)=|{QA}_{n+2,k}|=|{RA}_{n+2,k}|+ |\overline{{RA}}_{n+2,k}|$.
Lemma~\ref{asc-bar-D(n+2,k-1)-bijection} implies that $|{RA}_{n+2,k}|=|{HA}_{n+1,k}|=(n-k+2)\widetilde{B}(n,k-1)$.
Lemmas~\ref{asc-(k+1)E(n,k)-bijection} and~\ref{asc-nE(n-1,k-1)-bijection} tell us that \begin{eqnarray*}|\overline{{RA}}_{n+2,k}|&=&|{RFA}_{n+1,k}|\\
&=&|{FA}_{n+1,k}|-|\overline{{RFA}}_{n+1,k}|\\
&=&|{FA}_{n+1,k}|-|\overline{{HA}}_{n,k-1}|\\
&=&(k+1)\widetilde{B}_{n,k}-n\widetilde{B}_{n-1,k-1}.\end{eqnarray*}
Thus $\widetilde{B}(n+1,k)=(k+1)\widetilde{B}(n,k)+(n-k+2)\widetilde{B}(n,k-1)-n\widetilde{B}(n-1,k-1)$
 and so $\widetilde{B}(n,k)$ has the same recursion as $\widetilde{A}(n,k)$.
It is easy to check that $\widetilde{B}(0,0)=\widetilde{A}(0,0)=1$, $\widetilde{B}(1,0)=\widetilde{A}(1,0)=1$ and $\widetilde{B}(1,1)=\widetilde{A}(1,1)=1$. Hence $\widetilde{B}(n,k)=\widetilde{A}(n,k).$ \hspace{4cm}$\square$

\begin{thm}\label{equidistributed-bijection}
There is a bijection $\Omega_{n}$ from $\mathcal{Q}_{n}$ to itself such that $\des(\pi)=\asc(\Omega_n(\pi))$.
\end{thm}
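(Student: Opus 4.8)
The plan is to construct $\Omega_n$ by induction on $n$, gluing together the constructive bijections produced in the two preceding proofs with the induction hypothesis. The cases $\mathcal{Q}_1=\{1\}$ and $\mathcal{Q}_2=\{12,21\}$ I would check directly (take $\Omega_2$ to interchange $12$ and $21$). For the inductive step, assume $\Omega_m$ has been built for all $m\le n+1$. Since $\mathcal{Q}_{n+2}=\bigsqcup_k {QD}_{n+2,k}=\bigsqcup_k {QA}_{n+2,k}$ and the condition $\des(\pi)=\asc(\Omega_{n+2}(\pi))$ says exactly that $\Omega_{n+2}$ maps $\{\pi\in\mathcal{Q}_{n+2}:\des(\pi)=k\}$ onto $\{\pi\in\mathcal{Q}_{n+2}:\asc(\pi)=k\}$, it suffices to produce, for each fixed $k$, a bijection $\Omega_{n+2}\colon {QD}_{n+2,k}\to {QA}_{n+2,k}$.

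First I would reread the proof of Theorem~\ref{thm01} as a chain of set identifications: the maps $\phi_{n,k},\theta_{n,k},\psi_{n,k}$, together with the partitions ${QD}_{n+2,k}={RD}_{n+2,k}\sqcup\overline{{RD}}_{n+2,k}$ and ${HD}_{n+1,k-1}={RHD}_{n+1,k-1}\sqcup\overline{{RHD}}_{n+1,k-1}$ (with $\overline{{RHD}}_{n+1,k-1}=\psi_{n,k}(\overline{{HD}}_{n,k-1})$), package into an explicit bijection $u$ from ${QD}_{n+2,k}$ onto ${FD}_{n+1,k}\sqcup\bigl({HD}_{n+1,k-1}\setminus\overline{{RHD}}_{n+1,k-1}\bigr)$. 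Dually, the proof of Theorem~\ref{equidistributed} packages $\hat{\theta}_{n,k},\hat{\phi}_{n,k},\hat{\psi}_{n,k}$ into an explicit bijection $v$ from ${QA}_{n+2,k}$ onto $\bigl({FA}_{n+1,k}\setminus\overline{{RFA}}_{n+1,k}\bigr)\sqcup {HA}_{n+1,k-1}$, where $\overline{{RFA}}_{n+1,k}=\hat{\psi}_{n,k}(\overline{{HA}}_{n,k-1})$.

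Next I would bring in the induction hypothesis. It gives bijections ${QD}_{n+1,k}\to {QA}_{n+1,k}$, ${QD}_{n+1,k-1}\to {QA}_{n+1,k-1}$ (via $\Omega_{n+1}$) and ${QD}_{n,k-1}\to {QA}_{n,k-1}$ (via $\Omega_n$); combining these with any fixed bijections of the index sets (e.g.\ $0\leftrightarrow n+1$ and $j\leftrightarrow j$ between the index sets $\{0,\dots,k\}$ and $\{1,\dots,k\}\cup\{n+1\}$ of ${FD}_{n+1,k}$ and ${FA}_{n+1,k}$, and the identity on the common index sets in the ${HD}/{HA}$ and $\overline{{HD}}/\overline{{HA}}$ cases) yields bijections $\mu_F\colon {FD}_{n+1,k}\to {FA}_{n+1,k}$, $\mu_H\colon {HD}_{n+1,k-1}\to {HA}_{n+1,k-1}$, and $\mu_{\overline{H}}\colon \overline{{HD}}_{n,k-1}\to\overline{{HA}}_{n,k-1}$. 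Transporting the codomain of $u$ along $\mu_F\sqcup\mu_H$ identifies it with ${FA}_{n+1,k}\sqcup\bigl({HA}_{n+1,k-1}\setminus W\bigr)$, where $W:=\mu_H(\overline{{RHD}}_{n+1,k-1})$. Now ${QD}_{n+2,k}$ and ${QA}_{n+2,k}$ are both identified with ${FA}_{n+1,k}\sqcup {HA}_{n+1,k-1}$ minus a set of cardinality $n\widetilde{A}(n-1,k-1)$, but the deleted sets sit in different summands — $W\subseteq {HA}_{n+1,k-1}$ on one side, $\overline{{RFA}}_{n+1,k}\subseteq {FA}_{n+1,k}$ on the other. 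I would reconcile the two by a bijection $s$ from ${FA}_{n+1,k}\sqcup({HA}_{n+1,k-1}\setminus W)$ onto $({FA}_{n+1,k}\setminus\overline{{RFA}}_{n+1,k})\sqcup {HA}_{n+1,k-1}$ that is the identity away from the two holes and equals $\mu_H\circ\psi_{n,k}\circ\mu_{\overline{H}}^{-1}\circ\hat{\psi}_{n,k}^{-1}$ from $\overline{{RFA}}_{n+1,k}$ onto $W$. Finally set $\Omega_{n+2}:=v^{-1}\circ s\circ(\mu_F\sqcup\mu_H)\circ u$ on each ${QD}_{n+2,k}$, and assemble these into $\Omega_{n+2}\colon\mathcal{Q}_{n+2}\to\mathcal{Q}_{n+2}$.

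The hard part — and the reason $\Omega_n$ genuinely \emph{combines} the two proofs rather than merely composing them — is precisely this mismatch: the error term $n\widetilde{A}(n-1,k-1)$ enters the descent recursion inside ${HD}_{n+1,k-1}$ but enters the ascent recursion inside ${FA}_{n+1,k}$, so the two decompositions cannot be matched summand by summand, and one is forced to pass through the cardinality identity $|{QD}_{n+2,k}|+|\overline{{HD}}_{n,k-1}|=|{FD}_{n+1,k}|+|{HD}_{n+1,k-1}|$, its ascent analogue, and a cancellation of the common summand. Concretely, the only nontrivial thing to check is that $s$ is well defined, i.e.\ that $|W|=|\overline{{RFA}}_{n+1,k}|$ and that the displayed composite is a bijection between these two sets; this follows from Lemmas~\ref{nE(n-1,k-1)-bijection} and~\ref{asc-nE(n-1,k-1)-bijection} together with the equality $\widetilde{B}(n-1,k-1)=\widetilde{A}(n-1,k-1)$ proved along with Theorem~\ref{equidistributed}. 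Everything else is routine diagram-chasing: checking that $u$, $v$, and $s$ are bijections and that the composite indeed lands in ${QA}_{n+2,k}$.
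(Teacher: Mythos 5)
Your proposal is correct and is essentially the paper's own proof: unwinding your composite $v^{-1}\circ s\circ(\mu_F\sqcup\mu_H)\circ u$ on the three pieces (image in $RFA_{n+1,k}$, image in $\overline{RFA}_{n+1,k}$, and $\pi\in\overline{RD}_{n+2,k}$) reproduces exactly the paper's cases $(c_1)$--$(c_3)$, including the long composite $\hat{\theta}^{-1}\circ\hat{\Omega}_{n+1}\circ\psi\circ\hat{\Omega}_{n}^{-1}\circ\hat{\psi}^{-1}\circ\hat{\Omega}_{n+1}\circ\phi$ that reroutes the mismatched ``hole.'' Your explicit identification of the index sets $\{0,\dots,k\}$ and $\{1,\dots,k\}\cup\{n+1\}$ is a detail the paper leaves implicit, but it does not change the argument.
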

\begin{proof} we can give a recursive definition of the bijection $\Omega_{n}$. For $n=1$, we have $\mathcal{Q}_1=\{1\}$. Let $\Omega_1(1)=1$. For $n=2$, we have $\mathcal{Q}_2=\{12,21\}$. Let $\Omega_2(12)=21$ and $\Omega_2(21)=12$.

For any $m=1,2,\ldots,n+1$, suppose that $\Omega_m$ is a bijection from $\mathcal{Q}_{m}$ to itself such that $\des(\pi)=\asc(\Omega_m(\pi))$ for any $\pi\in\mathcal{Q}_{m}$. Furthermore, for any pair $[\pi,i]$ with $\pi\in \mathcal{Q}_{m}$ and a nonnegative integer $i$, we let $$\hat{\Omega}_m(\pi,i)=[\Omega_m(\pi),i]\text{ and }\hat{\Omega}_m^{-1}(\pi,i)=[\Omega_m^{-1}(\pi),i].$$

For any $\pi\in\mathcal{Q}_{n+2}$, suppose that $\pi\in\mathcal{QD}_{n+2,k}$ for some $k$.  Note that $${QD}_{n+2,k}={RD}_{n+2,k}\cup \overline{{RD}}_{n+2,k}.$$
Combing the bijections in Lemmas~\ref{(k+1)E(n,k)-bijection},~\ref{bar-D(n+2,k-1)-bijection},~\ref{nE(n-1,k-1)-bijection},
~\ref{asc-bar-D(n+2,k-1)-bijection},~\ref{asc-(k+1)E(n,k)-bijection} and~\ref{asc-nE(n-1,k-1)-bijection} and the induction hypothesis, we give the bijection $\Omega_{n+2}$ from $\mathcal{Q}_{n+2}$ to itself as follows:
\begin{itemize}
  \item [($c_1$)] If $\pi\in{RD}_{n+2,k}$ and $\hat{\Omega}_{n+1}\circ\phi(\pi)\in{RFA}_{n+1,k}$,
  then let $$\Omega_{n+2}(\pi)=\hat{\phi}^{-1}\circ\hat{\Omega}_{n+1}\circ\phi(\pi);$$
  \item [($c_2$)] If $\pi\in{RD}_{n+2,k}$ and $\hat{\Omega}_{n+1}\circ\phi(\pi)\in\overline{{RFA}}_{n+1,k}$, then let
  $$\Omega_{n+2}(\pi)=
  \hat{\theta}^{-1}\circ\hat{\Omega}_{n+1}\circ\psi\circ\hat{\Omega}_{n}^{-1}\circ\hat{\psi}^{-1}\circ\hat{\Omega}_{n+1}\circ\phi(\pi);$$
  \item [($c_3$)] If $\pi\in\overline{{RD}}_{n+2,k}$, then let $$\Omega_{n+2}(\pi)=\hat{\theta}^{-1}\circ\hat{\Omega}_{n+1}\circ\theta(\pi).$$
\end{itemize}
\end{proof}

By Theorems \ref{thm01} and \ref{equidistributed}, we get
$$\sum_{\sigma\in\Q_{n+1}}x^{\asc(\sigma)}=\sum_{\sigma\in\Q_{n+1}}x^{n-\des(\sigma)}=\sum_{\sigma\in\Q_{n+1}}x^{\des(\sigma)}.$$
Hence $$\widetilde{A}_n(x)=x^n\widetilde{A}_n\left(\frac{1}{x}\right),$$
which implies that $\widetilde{A}_n(x)$ is symmetric.
\subsection{The $n$th-order recurrence relations}
\hspace*{\parindent}

Recall the following recurrence relation which is attributed to Euler (see~\cite{Hyatt16} for instance):
\begin{equation}\label{Anx-recu02}
A_n(x)=\sum_{k=0}^{n-1}\binom{n}{k}(x-1)^{n-k-1}A_k(x) \quad\text{for $n\geq1$}.
\end{equation}

As an analog of~\eqref{Anx-recu02}, we now present the following result.
\begin{thm}\label{thm-BinomialAnx-recu01}
The polynomials $\widetilde{A}_{n}(x)$ satisfy the recurrence relation
\begin{equation}\label{BinomialAnx-recu01}
\widetilde{A}_n(x)=\sum\limits_{j=1}^{n}{n\choose j}(x-1)^{j-1}\widetilde{A}_{n-j}(x)+x^n
\end{equation}
for $n\geq 1$, with the initial value $\widetilde{A}_{0}(x)=1$. Equivalently,
we have
\begin{equation}\label{BinomialAnx-recu02}
\widetilde{A}_n(x)=\sum\limits_{k=0}^{n-1}{n\choose k}(x-1)^{n-k-1}\widetilde{A}_{k}(x)+x^n.
\end{equation}
\end{thm}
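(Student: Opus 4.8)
The plan is to pass to the exponential generating function $\widetilde{A}(x,z)=\sum_{n\ge0}\widetilde{A}_n(x)z^n/n!$ and to exploit the factorization \eqref{Axz-exz}, namely $\widetilde{A}(x,z)=e^{xz}A(x,z)$. The starting observation is that Euler's recurrence \eqref{Anx-recu02} is, after multiplying by $z^n/n!$ and summing over $n\ge1$, equivalent to the compact identity
\begin{equation*}
A(x,z)-1=\frac{e^{z(x-1)}-1}{x-1}\,A(x,z),
\end{equation*}
where one uses the binomial-convolution rule for products of exponential generating functions together with the expansion $\sum_{j\ge1}\frac{(x-1)^{j-1}}{j!}z^j=\frac{e^{z(x-1)}-1}{x-1}$. (Equivalently, this identity falls out immediately from the closed form $A(x,z)=(x-1)/(x-e^{z(x-1)})$, since $A(x,z)-1=(e^{z(x-1)}-1)/(x-e^{z(x-1)})$.)

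Next I would multiply this auxiliary identity through by $e^{xz}$ and invoke \eqref{Axz-exz} to obtain
\begin{equation*}
\widetilde{A}(x,z)-e^{xz}=\frac{e^{z(x-1)}-1}{x-1}\,\widetilde{A}(x,z).
\end{equation*}
Extracting the coefficient of $z^n/n!$ on both sides then finishes the proof: on the left it is $\widetilde{A}_n(x)-x^n$ (using $e^{xz}=\sum_{n\ge0}x^n z^n/n!$), while on the right, by the binomial-convolution rule together with the expansion displayed above, it is $\sum_{j=1}^{n}\binom{n}{j}(x-1)^{j-1}\widetilde{A}_{n-j}(x)$. For $n\ge1$ this is exactly \eqref{BinomialAnx-recu01} (and for $n=0$ the identity reads $1-1=0$, consistently), and the change of index $k=n-j$ converts \eqref{BinomialAnx-recu01} into the equivalent form \eqref{BinomialAnx-recu02}.

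I do not expect a genuine obstacle here. If one derives the auxiliary identity directly from \eqref{Anx-recu02}, then the symbol $\frac{e^{z(x-1)}-1}{x-1}$ simply denotes the power series $\sum_{j\ge1}\frac{(x-1)^{j-1}}{j!}z^j\in\mathbb{Q}[x][[z]]$, every manipulation stays within $\mathbb{Q}[x][[z]]$, and no division is actually performed; only if one prefers to read the identity off the closed form must one work over $\mathbb{Q}(x)$ so that $x-1$ is invertible. I would also remark that a combinatorial proof should be possible through the model $\widetilde{A}_n(x)=\sum_{\pi\in\Q_{n+1}}x^{\des(\pi)}$: the term $x^n$ records the unique decreasing element of $\Q_{n+1}$, and the remaining permutations can be classified by a suitable breakpoint in the spirit of the standard sign-reversing proof of \eqref{Anx-recu02}; but the generating-function derivation above is by far the shortest route, so that is the one I would write out.
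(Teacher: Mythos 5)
Your argument is correct, but it is genuinely different from the one in the paper. You derive \eqref{BinomialAnx-recu01} purely algebraically: you recast Euler's recurrence \eqref{Anx-recu02} as the power-series identity $A(x,z)-1=\frac{e^{z(x-1)}-1}{x-1}A(x,z)$ (with the fraction read as the series $\sum_{j\ge1}\frac{(x-1)^{j-1}}{j!}z^j$, so no division is needed), multiply through by $e^{xz}$, and invoke the factorization \eqref{Axz-exz} to transport the recurrence from $A_n(x)$ to $\widetilde{A}_n(x)$, the extra term $x^n$ arising as the coefficient of $z^n/n!$ in $e^{xz}$. I checked the coefficient extraction and the reindexing $k=n-j$; both are right, and the $n=0$ case is consistent as you note. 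The paper instead gives a direct combinatorial proof: for a positive integer $x$ it realizes $\widetilde{A}_n(x)$ as the cardinality of the set $\Q_{n+1}(x)$ of pairs $(\pi,\phi)$ with $\pi\in\Q_{n+1}$ and $\phi:\Des(\pi)\to\{0,1,\ldots,x-1\}$, and then decomposes each pair at a breakpoint $k'$ inside the terminal decreasing run of $\pi$ (the largest position in that run carrying color $0$, or the start of the run if none does); the case in which the run begins at the entry $n+1$ with nonzero color contributes $x^n$, and the remaining cases contribute $\sum_{j=1}^n\binom{n}{j}(x-1)^{j-1}\widetilde{A}_{n-j}(x)$ --- precisely the combinatorial refinement you speculate about in your last paragraph. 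Your route is shorter and makes the structural reason for the recurrence transparent (it is Euler's recurrence conjugated by $e^{xz}$), but it leans on \eqref{Anx-recu02} as a black box and on the generating-function identity \eqref{Axz-exz}; the paper's route is self-contained at the level of the combinatorial model $\widetilde{A}_n(x)=\sum_{\pi\in\Q_{n+1}}x^{\des(\pi)}$ and explains each summand bijectively. Both are complete proofs of the theorem.
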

\begin{proof}
Let $x$ be a positive integer. For any $n\geq 0$, let $\Q_{n+1}(x)$ be the set of pairs $(\pi,\phi)$ such that $\pi\in\Q_{n+1}$
 and $\phi$ is a map from $\Des(\pi)$ to $\{0,1,\ldots,x-1\}$. Thus
$$\widetilde{A}_n(x)=\sum\limits_{\pi\in\Q_{n+1}}x^{\des(\pi)}=|\Q_{n+1}(x)|.$$

For any $(\pi,\phi)\in\Q_{n+1}(x)$, there is a unique index $k\geq 1$ which satisfies
$\pi(k-1)<\pi(k)$ and $\pi(k)>\pi(k+1)>\cdots
>\pi(n+1)$. For the sequence $\pi(k),\pi(k+1),\ldots
,\pi(n+1)$, if
 $\phi(\pi(i))=0$ for some $k\leq i\leq n+1$, then let $k'$ be the
largest index in $\{k,k+1,\ldots,n+1\}$ such that $\phi(\pi(k'))=0$;
otherwise, let $k'=k$. Let $$\sigma=\pi(1),\pi(2),.\ldots,\pi(k')$$ and $$B=\{\pi(k'+1),\ldots,\pi(n+1)\}.$$
Then $\sigma$ is a permutation defined on the set $\{1,2,\ldots,n+1\}\setminus B$ and the entry $n+1$ appears as the first descent of $\sigma$ from left to right.

Now, we distinguish between the
following two cases:

{\bf Case 1.}$\pi(k')=n+1$ and $\phi(\pi(k'))\neq 0$.

Then the entry $n+1$ is the unique descent of the permutation $\pi$. Thus, we have $$\phi(\pi(i))\neq 0$$ for all $\pi(i)\in \Des(\pi)$. Note that $1\leq |B|\leq n$ and there are
${\binom{n}{|B|}}$ ways to form the set $B$.
Since $\Des(\pi)=\{n+1\}\cup (B\setminus\{\pi(n+1)\})$, there are $$(x-1)^{|\Des(\pi)|}=(x-1)^{|B|}$$ ways to form the map $\phi$.
This provides the term $\sum\limits_{B\subseteq [n]}(x-1)^{|B|}=x^n$.

{\bf Case 2.} Either $(i)$ $\pi(k')\neq n+1$ or $(ii)$ $\pi(k')=n+1$ and $\phi(\pi(k'))=0$.

Let
$$\red(\sigma):=\red(\sigma(1)),\red(\sigma(2)),\ldots ,\red(\sigma(k'))\in\mathfrak{S}_{k'},$$
where $\red$ is an increasing map from $\{\sigma(1),\sigma(2),\ldots,\sigma(k')\}$ to
$\{1,2,\ldots,k'\}$ such that $\red(\sigma(i))<\red(\sigma(j))$ if $\sigma(i)<\sigma(j)$ for all $i,j$.
Then the entry $k'$ is the first descent of the permutation $\red(\sigma)$ from left to right since $\red(n+1)=k'$ and $\red(\sigma)\in{\Q_{k'}}$.
Define a map $\phi':\Des(\red(\sigma))\mapsto\{0,1,\ldots,x-1\}$ by letting
 $$\phi'(i)=\phi(\red^{-1}(i))\text{ if }\red^{-1}(i)\neq\pi(k').$$
Then $(\red(\sigma),\phi')\in\Q_{k'}(x)$. Moreover, $\phi(i)\in\{1,2,\ldots,x-1\}$ for any $i\in B\setminus\{\pi(n+1)\}$.
Note that $1\leq |B|\leq n$, $k'=n+1-|B|$, there are
${\binom{n}{|B|}}$ ways to form the set $B$ and $\Q_{k'-1}(x)$ ways to form the pair $(\red(\sigma),\phi')$. Moreover, we have $\phi(i)\in\{1,2,\ldots,x-1\}$ for any $i\in B\setminus\{\pi(n+1)\}$. This provides the term $$\sum\limits_{j=1}^{n}{n\choose j}(x-1)^{j-1}\widetilde{A}_{n-j}(x).$$
Hence we derive the recurrence relation~\eqref{BinomialAnx-recu01}. Setting $k=n-j$ in~\eqref{BinomialAnx-recu01}, we immediately get~\eqref{BinomialAnx-recu02}. This completes the proof.

\end{proof}

Let $a_n=\sum_{\pi\in\Q_{n+1}}2^{\des(\pi)}$.
Note that
$$\widetilde{A}(2,z)=\frac{e^{2z}}{2-e^z}.$$
Let $\Stirling{n}{k}$ be the Stirling number of the second kind, which counts partitions of $[n]$ into $k$ nonempty subsets.
It is easy to verify that $a_n=2\sum_{k=0}^nk!\Stirling{n+1}{k+1}-1$. In particular, $a_0=1,a_1=3,a_2=11,a_3=51$.
The numbers $a_n$ have been studied by Gross~\cite{Gross}, Nelsen and Schmidt~\cite{Nelsen}.
It should be noted that
$a_n$ is the number of chains in power set of $[n]$ (see~\cite[A007047]{Sloane}). 

\begin{cor}
For $n\geq 1$, we have $$a_n=\sum\limits_{j=1}^{n}{n\choose j}a_{n-j}+2^n.$$
\end{cor}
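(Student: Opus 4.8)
The plan is to deduce this directly from Theorem~\ref{thm-BinomialAnx-recu01} by specializing at $x=2$, so essentially no new work is required. First I would record that, since $\widetilde{A}_n(x)=\sum_{\pi\in\Q_{n+1}}x^{\des(\pi)}$, we have
\[
a_n=\sum_{\pi\in\Q_{n+1}}2^{\des(\pi)}=\widetilde{A}_n(2)
\]
for every $n\geq 0$; in particular $a_0=\widetilde{A}_0(2)=1$, which matches the value listed in the text.

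Next I would substitute $x=2$ into the recurrence~\eqref{BinomialAnx-recu01}. The point is that the factor $(x-1)^{j-1}$ collapses: $(2-1)^{j-1}=1$ for all $j\geq 1$. Hence
\[
\widetilde{A}_n(2)=\sum_{j=1}^{n}\binom{n}{j}(2-1)^{j-1}\widetilde{A}_{n-j}(2)+2^n=\sum_{j=1}^{n}\binom{n}{j}\widetilde{A}_{n-j}(2)+2^n,
\]
valid for $n\geq 1$. Rewriting this in terms of $a_n$ via the identity $a_m=\widetilde{A}_m(2)$ gives exactly the claimed relation $a_n=\sum_{j=1}^{n}\binom{n}{j}a_{n-j}+2^n$.

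There is no real obstacle here: the only thing to be careful about is making the identification $a_n=\widetilde{A}_n(2)$ explicit (including the base case $n=0$) so that the substitution is legitimate for all indices appearing on the right-hand side, and then checking the trivial simplification $(2-1)^{j-1}=1$. One may optionally remark, as a sanity check, that the first values $a_0=1,a_1=3,a_2=11,a_3=51$ satisfy the recurrence, e.g. $a_3=\binom{3}{1}a_2+\binom{3}{2}a_1+\binom{3}{3}a_0+2^3=33+9+1+8=51$.
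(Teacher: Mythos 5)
Your proof is correct and is exactly the intended argument: the corollary is the specialization $x=2$ of Theorem~\ref{thm-BinomialAnx-recu01}, using $a_n=\widetilde{A}_n(2)$ and $(2-1)^{j-1}=1$. Nothing further is needed.
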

\section{Multivariable binomial-Eulerian polynomials}\label{section03}
\hspace*{\parindent}

Let $\pi=\pi(1)\pi(2)\cdots\pi(n)\in\msn$.
An {\it excedance} in
$\pi$ is an index $i$ such that $\pi(i)>i$ and  a {\it fixed point} in
$\pi$ is an index $i$ such that $\pi(i)=i$. 
As usual, let $\exc(\pi)$, $\fix(\pi)$
and $\cyc(\pi)$ denote the number of excedances, fixed points and
cycles in $\pi$ respectively. For example, the permutation
$\pi=3142765$ has the cycle decomposition $(1342)(57)(6)$, so $\cyc(\pi)=3$, $\exc(\pi)=3$ and $\fix(\pi)=1$.
There is a large of literature devoted to various
generalizations and refinements of the joint distribution of
excedances and cycles, see, e.g.~\cite{Ksavrelof03,Ma16,Zhao13} and the references therein.

Define $$A_n(x,y,q)=\sum_{\pi\in\msn}x^{\exc(\pi)}y^{\fix(\pi)}q^{\cyc(\pi)}.$$
Let $A(x,y,q;z)=1+\sum_{n\geq 1}A_n(x,y,q)\frac{z^n}{n!}$.
Brenti~\cite[Proposition 7.3]{Bre00}
obtained that
\begin{equation*}\label{anxq-rr}
A(x,1,q;z)=\left(\frac{1-x}{e^{z(x-1)}-x}\right)^q.
\end{equation*}
Note that each object of $\msn$ is a disjoint union of one object counted by $A(x,0,q;z)$ and some fixed points.
Since each fixed point contributes no excedance but one cycle, by rules of exponential generating
function one has $A(x,1,q;z)=e^{qz}A(x,0,q;z)$ and $A(x,y,q;z)=e^{yqz}A(x,0,q;z)$.
Therefore,
\begin{equation}
A(x,y,q;z)=\left(\frac{1-x}{e^{z(x-y)}-xe^{(1-y)z}}\right)^q,
\end{equation}
which was also obtained by Ksavrelof and Zeng~\cite[p.~2]{Ksavrelof03}.
In the rest of this section, we study multivariable binomial-Eulerian polynomials.

A \emph{right-to-left maximum} of $\sigma\in\Q$ is an element $\sigma_i$ such that $\sigma_i>\sigma_j$ for every $j\in \{i+1,i+2,\ldots,n\}$ or $i=n$.
Let $\RLMAX(\sigma)$ denote the set of entries of right-to-left maxima of $\sigma$. Let $\rlmax(\sigma)=|\RLMAX(\sigma)|$.
For example, $\RLMAX(163254)=\{4,5,6\}$ and $\rlmax(163254)=3$.
A \emph{block} of $\sigma$ is a substring which ends with a right-to-left maximum, and contains exactly this one right-to-left maximum;
moreover, the substring is maximal, i.e., not contained in any larger such substring.
Clearly, any
permutation has a unique decomposition as a sequence of blocks. Let $\bk(\sigma)$ and $\bkone(\sigma)$ be the numbers of blocks and blocks of length one of $\sigma$, respectively.
Let $\fc(\sigma)$ be the length (number of terms) of the first block of $\sigma$ from left to right.
For example,
the block decomposition of $163254$ is given by $[16][325][4]$, $\bk(163254)=3,~\bkone(163254)=1$ and $\fbk(163254)=2$.

For any $\sigma\in\mathfrak{S}_{n}$, we can write $\sigma$ in standard cycle form satisfying the following conditions:
\begin{itemize}
  \item [\rm ($i$)] each cycle is end with its largest element;
  \item [\rm ($ii$)] the cycles are written in decreasing order of their largest element.
\end{itemize}

In the following discussion, we shall always write the cycle structure of $\sigma\in\msn$ in standard cycle form.
\begin{defn}
Let $\widehat{\mathcal{Q}}_{n}$ be the set of permutations of $[n]$ with the restriction that the sequence in the cycle containing $n$ is increasing.
\end{defn}
For example, $$\widehat{\mathcal{Q}}_3=\{(3)(2)(1),(2,3)(1),(3)(1,2),(1,3)(2),(1,2,3)\}.$$

Define $\widehat{\sigma}$ to be the word obtained from $\sigma\in\widehat{\Q}_n$ by writing it in standard cycle form and erasing
the parentheses. Then $\widehat{\sigma}\in\mathcal{Q}_n$. Thus, we get a bijection from $\widehat{\Q}_n$ to $\mathcal{Q}_n$.
Suppose that
$$\sigma=(\sigma_1,\sigma_2,\ldots,\sigma_{i_1})(\sigma_{i_1+1},\sigma_{i_1+2},\ldots,\sigma_{i_2})\cdots (\sigma_{i_{k-1}+1},\sigma_{i_{k-2}+2},\ldots,\sigma_{i_k})\in\widehat{\Q}_n.$$
Then
$\sigma_{i_1},\sigma_{i_2},\ldots,\sigma_{i_k}$ are the largest
elements of their cycles, and $\sigma_{i_1}>\sigma_{i_2}>\ldots>\sigma_{i_k}$.
Hence $\sigma(\sigma_i)>\sigma_i$ if and only if $\sigma_i<\sigma_{i+1}$. Let $\fc(\sigma)$ be the number of elements in the first cycle of $\sigma$.

From the above discussion, we can now conclude the following result.
\begin{prop}
For any $n\geq 1$, we have $$\sum_{\sigma\in\Q_{n}}x^{\asc(\sigma)}y^{\bkone(\sigma)}q^{\bk(\sigma)}p^{\fbk(\sigma)}=\sum_{\sigma\in\widehat{\Q}_{n}}x^{\exc(\sigma)}y^{\fix(\sigma)}q^{\cyc(\sigma)}p^{\fc(\sigma)}.$$
\end{prop}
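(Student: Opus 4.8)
The plan is to establish the identity by exhibiting the bijection $\sigma\mapsto\widehat\sigma$ between $\widehat{\mathcal{Q}}_n$ and $\mathcal{Q}_n$ described just above the statement, and then checking that it carries the four cycle-theoretic statistics on $\widehat{\mathcal{Q}}_n$ to the four word-theoretic statistics on $\mathcal{Q}_n$. Concretely, for $\sigma\in\widehat{\mathcal{Q}}_n$ written in standard cycle form, $\widehat\sigma$ is obtained by erasing parentheses. First I would record the structural fact already noted in the text: because the cycles are closed by their largest elements and ordered by decreasing maxima, the positions of the cycle-maxima in $\widehat\sigma$ are exactly the right-to-left maxima of $\widehat\sigma$, and each cycle of $\sigma$ becomes precisely a block of $\widehat\sigma$ in the sense defined above (a maximal substring containing a single right-to-left maximum, which terminates it). This is the key dictionary entry and most of the work.

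Granting this block$\leftrightarrow$cycle correspondence, the four equalities follow almost formally. For the cycle/block count: $\cyc(\sigma)=\bk(\widehat\sigma)$, since cycles and blocks are in bijection. For fixed points versus length-one blocks: a cycle of $\sigma$ is a singleton $(\sigma_i)$ exactly when the corresponding block of $\widehat\sigma$ has length one, so $\fix(\sigma)=\bkone(\widehat\sigma)$. For the first-block statistic: the first cycle in standard form is the one with the overall largest element, namely the cycle containing $n$; under erasure of parentheses it becomes the first block of $\widehat\sigma$, so $\fc(\sigma)=\fbk(\widehat\sigma)$. For excedances versus ascents: the text already observes that, within a cycle written as $(\sigma_{i},\sigma_{i+1},\dots)$ ending in its largest element, one has $\sigma(\sigma_i)>\sigma_i$ iff $\sigma_i<\sigma_{i+1}$; the only subtlety is the last entry of each cycle, which is a cycle-maximum (hence maps to the cycle's smallest element, not an excedance) and is simultaneously a right-to-left maximum of $\widehat\sigma$ (hence, being either the end of the word or larger than its successor, not an ascent of $\widehat\sigma$ — here one uses the convention $\Asc^*$ is not invoked, just ordinary ascents). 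Thus excedance positions of $\sigma$ correspond exactly to ascent positions of $\widehat\sigma$, giving $\exc(\sigma)=\asc(\widehat\sigma)$.

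To finish, I would combine these four statistic equalities: summing $x^{\exc(\sigma)}y^{\fix(\sigma)}q^{\cyc(\sigma)}p^{\fc(\sigma)}$ over $\widehat{\mathcal{Q}}_n$ and re-indexing by $\widehat\sigma$ yields $\sum_{\tau\in\mathcal{Q}_n}x^{\asc(\tau)}y^{\bkone(\tau)}q^{\bk(\tau)}p^{\fbk(\tau)}$, which is the claimed identity. The one place where care is genuinely needed — the main obstacle — is verifying the block$\leftrightarrow$cycle correspondence rigorously, in particular that after erasing parentheses no spurious right-to-left maximum appears inside a cycle and no genuine block boundary is created or destroyed; this rests on the standard-cycle-form conditions (each cycle ends in its maximum, cycles ordered by decreasing maxima) and is best argued by noting that the cycle-maxima $\sigma_{i_1}>\sigma_{i_2}>\cdots>\sigma_{i_k}$ are, reading $\widehat\sigma$ from the right, exactly the successive new record values, while every non-maximal entry of a cycle is dominated by its own cycle-maximum sitting to its right and hence is not a right-to-left maximum of $\widehat\sigma$.
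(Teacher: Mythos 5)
Your proposal is correct and follows essentially the same route as the paper, which derives the proposition from the parenthesis-erasing bijection $\sigma\mapsto\widehat\sigma$ between $\widehat{\Q}_n$ and $\Q_n$ and the observation that $\sigma(\sigma_i)>\sigma_i$ if and only if $\sigma_i<\sigma_{i+1}$. You merely make explicit the cycle-to-block dictionary (cycle maxima being exactly the right-to-left maxima) that the paper leaves as an implicit consequence of the standard cycle form.
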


Let $\widetilde{A}_n(x,y,q,p)=\sum_{\sigma\in\widehat{\Q}_{n+1}}x^{\exc(\sigma)}y^{\fix(\sigma)}q^{\cyc(\sigma)}p^{\fc(\sigma)}$.
The first few $\widetilde{A}_n(x,y,q,p)$ are given as follows:
\begin{align*}
\widetilde{A}_0(x,y,q,p)&=ypq,\\
\widetilde{A}_1(x,y,q,p)&=y^2pq^2+xp^2q,\\
\widetilde{A}_2(x,y,q,p)&=pq^3y^3+pq^2xy+2p^2q^2xy+p^3qx^2.
\end{align*}

\begin{thm}
Let
$\widetilde{A}(x,y,q,p;z)=\sum_{n\geq 0}\widetilde{A}_n(x,y,q,p)\frac{z^n}{n!}$.
We have
\begin{equation}\label{EGF-A}
\widetilde{A}(x,y,q,p;z)=\left(e^{xpz}+y-1\right)pqA(x,y,q;z).
\end{equation}

\end{thm}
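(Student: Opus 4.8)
The plan is to prove~\eqref{EGF-A} by a direct structural decomposition of the permutations counted by $\widetilde{A}_n(x,y,q,p)$, followed by the exponential-generating-function product rule. Every $\sigma\in\widehat{\Q}_{n+1}$ is recorded in standard cycle form, and since $n+1$ is the largest letter the cycle containing $n+1$ is the \emph{first} cycle; the defining restriction of $\widehat{\Q}_{n+1}$ says exactly that this first cycle, read left to right, is increasing. Hence the first cycle is $(t_1,t_2,\dots,t_{j-1},n+1)$ with $t_1<t_2<\cdots<t_{j-1}$, where $j=\fc(\sigma)$, and it is determined by the $(j-1)$-element set $T=\{t_1,\dots,t_{j-1}\}\subseteq[n]$. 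Erasing this cycle leaves a permutation $\tau$, again in standard cycle form, of $[n]\setminus T$, and conversely any pair $(T,\tau)$ reassembles uniquely to an element of $\widehat{\Q}_{n+1}$; the key point (see below) is that $\tau$ is completely unconstrained.

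Next I would track the four statistics through this decomposition. In the first cycle one has $\sigma(t_i)=t_{i+1}>t_i$ for $1\le i\le j-1$ (with $t_j:=n+1$) and $\sigma(n+1)=t_1<n+1$, so the first cycle contributes $j-1$ excedances, exactly one cycle, the factor $p^{\fc}=p^{j}$, and a factor $y$ precisely when it is the singleton $(n+1)$, i.e.\ when $m:=j-1=|T|$ is $0$. Since $\exc,\fix,\cyc$ are additive over cycles and invariant under the order-preserving relabelling identifying $[n]\setminus T$ with $[n-m]$, and there are $\binom{n}{m}$ sets $T$ of each size $m$, we obtain
\begin{equation*}
\widetilde{A}_n(x,y,q,p)=pq\sum_{m=0}^{n}\binom{n}{m}(xp)^m\bigl(1+(y-1)\delta_{m,0}\bigr)A_{n-m}(x,y,q),
\end{equation*}
with $A_0(x,y,q)=1$, which expands to
\begin{equation*}
\widetilde{A}_n(x,y,q,p)=pq\sum_{m=0}^{n}\binom{n}{m}(xp)^m A_{n-m}(x,y,q)+pq(y-1)A_n(x,y,q).
\end{equation*}

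Finally I would multiply by $z^n/n!$ and sum over $n\ge 0$. The first term is a binomial convolution of $\bigl((xp)^m\bigr)_{m\ge0}$ with $\bigl(A_m(x,y,q)\bigr)_{m\ge0}$, whose exponential generating functions are $e^{xpz}$ and $A(x,y,q;z)$ respectively, so it sums to $pq\,e^{xpz}A(x,y,q;z)$; the second term sums to $pq(y-1)A(x,y,q;z)$. Adding these gives $\widetilde{A}(x,y,q,p;z)=\bigl(e^{xpz}+y-1\bigr)pq\,A(x,y,q;z)$, which is~\eqref{EGF-A}. The only step that really needs care — and hence the main obstacle — is the structural claim that after erasing the first cycle of $\sigma\in\widehat{\Q}_{n+1}$ the remainder $\tau$ is an \emph{arbitrary} permutation written in standard cycle form: this holds because the sole constraint defining $\widehat{\Q}$ concerns the cycle of the global maximum, and deleting that cycle preserves both standard-cycle-form conventions (each cycle ends in its largest letter; cycles are listed in decreasing order of largest letter) for the surviving cycles. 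As a check, the displayed recurrence reproduces the listed values $\widetilde{A}_0,\widetilde{A}_1,\widetilde{A}_2$ using $A_0=1$, $A_1=yq$ and $A_2=y^2q^2+xq$.
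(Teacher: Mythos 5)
Your proposal is correct and follows essentially the same route as the paper: split off the (increasing) first cycle containing $n+1$, which is determined by its underlying $m$-element subset of $[n]$, track how $\exc$, $\fix$, $\cyc$, $\fc$ change, obtain the convolution recurrence $\widetilde{A}_n(x,y,q,p)=ypqA_n(x,y,q)+\sum_{m\ge1}\binom{n}{m}x^mqp^{m+1}A_{n-m}(x,y,q)$, and sum it as a product of exponential generating functions. The only cosmetic difference is that you package the $m=0$ case with a Kronecker delta and sum the full binomial convolution directly, whereas the paper adds and subtracts the $m=0$ term.
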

\begin{proof}
Let $n$ be a fixed positive integer. Given $\pi\in \widehat{\Q}_{n+1}$.
Suppose the first cycle of $\pi$ is given by $\sigma=(c_1,c_2,\ldots,c_{k},n+1)$. So $\pi$ can be split into the
cycle $\sigma$ and a permutation $\tau$ on the set
 $\{1,2,\ldots,n+1\}\setminus\{c_1,c_2,\ldots,c_k,n+1\}$, i.e., $\pi=\sigma \cdot\tau$.
When $k=0$, we have $$\exc(\pi)=\exc(\tau), \fix(\pi)=\fix(\tau)+1, \cyc(\pi)=\cyc(\tau)+1,\fc(\pi)=1.$$
This provides the term $ypqA_{n}(x,y,q)$.
When $1\leq k\leq n$,
there are $\binom{n}{k}$ ways to form the set $\{c_1,c_2,\ldots,c_k\}$. Moreover, we have
$$\exc(\pi)=\exc(\tau)+k, \fix(\pi)=\fix(\tau), \cyc(\pi)=\cyc(\tau)+1,\fc(\pi)=k+1.$$
This provides the term $\sum\limits_{k=1}^n{n\choose{k}}x^{k}qp^{k+1}A_{n-k}(x,y,q)$.
Therefore, we obtain
\begin{equation}\label{Rec-A}
\widetilde{A}_{n}(x,y,q,p)=ypqA_{n}(x,y,q)+\sum\limits_{k=1}^n{n\choose{k}}x^{k}qp^{k+1}A_{n-k}(x,y,q).
\end{equation}
Multiplying both sides of~\eqref{Rec-A} by $z^n/n!$ and summing over all nonnegative integers $n$, we get that
\begin{align*}
\widetilde{A}(x,y,q,p;z)&=ypqA(x,y,q;z)+pq\sum_{n=1}^{\infty}\sum_{k=1}^{n}\binom{n}{k}(xp)^kA_{n-k}(x,y,q)\frac{z^n}{n!}\\
&=ypqA(x,y,q;z)+pq\sum_{n=1}^{\infty}\sum_{k=0}^n\binom{n}{k}(xp)^kA_{n-k}(x,y,q)\frac{z^n}{n!}-pq\left((A(x,y,q;z)-1\right)\\
&=ypqA(x,y,q;z)+pq\left(e^{xpz}A(x,y,q;z)-1\right)-pq\left((A(x,y,q;z)-1\right)\\
&=\left(e^{xpz}+y-1\right)pqA(x,y,q;z).
\end{align*}
This completes the proof.
\end{proof}

From~\eqref{EGF-A}, we see that
\begin{align*}
\widetilde{A}(x,1,-1,-1;z)&=e^{-xz}A(x,1,-1;z)=\frac{e^{-z}-xe^{-xz}}{1-x},\\
\widetilde{A}(x,1,-1,1;z)&=-e^{xz}A(x,1,-1;z)=\frac{e^{2xz-z}-xe^{xz}}{x-1}.\\
\end{align*}
It is routine to check that
\begin{align*}
\frac{e^{-z}-xe^{-xz}}{1-x}&=\sum_{n=0}^{\infty}(-1)^n\frac{1-x^{n+1}}{1-x}\frac{z^n}{n!},
\end{align*}
\begin{equation}\label{betti-GF01}
\frac{e^{(2x-1)z}-xe^{xz}}{x-1}=\sum_{n=0}^{\infty}\frac{(1-2x)^{2n}-x^{2n+1}}{x-1}\frac{z^{2n}}{(2n)!}
+\sum_{n=1}^{\infty}\frac{(1-2x)^{2n-1}+x^{2n}}{1-x}\frac{z^{2n-1}}{(2n-1)!}.
\end{equation}

Therefore, we get the following corollary.
\begin{cor}\label{cor3.3}
For $n\geq 0$, we have
\begin{align*}
\widetilde{A}_n(x,1,-1,-1)&=\sum_{\sigma\in\widehat{\Q}_{n+1}}x^{\exc(\sigma)}(-1)^{\cyc(\sigma)+\fc(\sigma)}=(-1)^n(1+x+x^2+\cdots+x^n);\\
\widetilde{A}_n(x,1,-1,1)&=\sum_{\sigma\in\widehat{\Q}_{n+1}}x^{\exc(\sigma)}(-1)^{\cyc(\sigma)}
=\sum_{k=0}^nx^{n-k}\sum_{i=k}^n(-1)^{i-1}2^{n-i}\binom{n}{i}.
\end{align*}
\end{cor}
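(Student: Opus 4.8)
\textbf{Proof proposal for Corollary~\ref{cor3.3}.}
The plan is to specialize the generating function identity~\eqref{EGF-A} at $(y,q,p)=(1,-1,-1)$ and $(y,q,p)=(1,-1,1)$, and then extract coefficients. First I would record the two closed forms already displayed just before the corollary, namely
$$\widetilde{A}(x,1,-1,-1;z)=\frac{e^{-z}-xe^{-xz}}{1-x},\qquad \widetilde{A}(x,1,-1,1;z)=\frac{e^{(2x-1)z}-xe^{xz}}{x-1},$$
which follow immediately from~\eqref{EGF-A} together with $A(x,1,-1;z)=\dfrac{e^{z(x-1)}-x}{1-x}$ (the case $q=-1$ of Brenti's formula $A(x,1,q;z)=\left(\frac{1-x}{e^{z(x-1)}-x}\right)^q$). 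The left-hand sides equal $\sum_{n\ge 0}\widetilde{A}_n(x,1,-1,-1)\frac{z^n}{n!}$ and $\sum_{n\ge 0}\widetilde{A}_n(x,1,-1,1)\frac{z^n}{n!}$ by definition of $\widetilde{A}(x,y,q,p;z)$, while the combinatorial descriptions $\widetilde{A}_n(x,1,-1,-1)=\sum_{\sigma\in\widehat{\Q}_{n+1}}x^{\exc(\sigma)}(-1)^{\cyc(\sigma)+\fc(\sigma)}$ and $\widetilde{A}_n(x,1,-1,1)=\sum_{\sigma\in\widehat{\Q}_{n+1}}x^{\exc(\sigma)}(-1)^{\cyc(\sigma)}$ are just the definition of $\widetilde{A}_n(x,y,q,p)$ evaluated at those values of $y,q,p$.

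Next I would carry out the Taylor expansions. For the first identity, expand $e^{-z}=\sum_n(-1)^n\frac{z^n}{n!}$ and $e^{-xz}=\sum_n(-x)^n\frac{z^n}{n!}$, so that
$$\frac{e^{-z}-xe^{-xz}}{1-x}=\sum_{n\ge 0}\frac{(-1)^n-x(-x)^n}{1-x}\,\frac{z^n}{n!}=\sum_{n\ge 0}(-1)^n\frac{1-x^{n+1}}{1-x}\,\frac{z^n}{n!},$$
which is exactly the expansion displayed before the corollary; reading off the coefficient of $z^n/n!$ and using $\frac{1-x^{n+1}}{1-x}=1+x+\cdots+x^n$ gives the first formula. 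For the second identity I would split into even and odd powers of $z$ exactly as in~\eqref{betti-GF01}: writing $e^{(2x-1)z}=\sum_m\frac{(2x-1)^m}{m!}z^m$ and $e^{xz}=\sum_m\frac{x^m}{m!}z^m$, the coefficient of $z^n/n!$ in $\dfrac{e^{(2x-1)z}-xe^{xz}}{x-1}$ is $\dfrac{(2x-1)^n-x^{n+1}}{x-1}$; one then checks this equals $\dfrac{(1-2x)^{2n}-x^{2n+1}}{x-1}$ when $n$ is even and $\dfrac{(1-2x)^{2n-1}+x^{2n}}{1-x}$ after re-indexing when $n$ is odd, matching~\eqref{betti-GF01}. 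Finally I would rewrite $\dfrac{(2x-1)^n-x^{n+1}}{x-1}$ as a polynomial in $x$: expand $(2x-1)^n=\sum_{i=0}^n\binom{n}{i}(2x)^i(-1)^{n-i}$ and divide by $x-1$, collecting terms so that the coefficient of $x^{n-k}$ becomes $\sum_{i=k}^n(-1)^{i-1}2^{n-i}\binom{n}{i}$, giving the stated double-sum form.

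The only real work is the last bookkeeping step: verifying that the polynomial division of $(2x-1)^n-x^{n+1}$ by $x-1$ produces precisely the coefficient $\sum_{i=k}^n(-1)^{i-1}2^{n-i}\binom{n}{i}$ of $x^{n-k}$. This is a routine but slightly fussy manipulation — one convenient route is to use $\frac{1}{x-1}=-\sum_{j\ge 0}x^j$ formally (or, rigorously, to multiply the candidate answer back by $x-1$ and check it telescopes to $(2x-1)^n-x^{n+1}$, using $\sum_{i}(-1)^{i}2^{n-i}\binom{n}{i}=(2-1)^n\cdot(-1)^{?}$-type evaluations at the endpoints $k=0$ and $k=n$). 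Since the even/odd split in~\eqref{betti-GF01} has already been asserted in the text, I expect no genuine obstacle here; everything reduces to elementary power-series extraction.
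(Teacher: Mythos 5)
Your proposal is correct and follows essentially the same route as the paper: specialize \eqref{EGF-A} at $(y,q,p)=(1,-1,-1)$ and $(1,-1,1)$ using $A(x,1,-1;z)=\frac{e^{z(x-1)}-x}{1-x}$, expand the resulting exponential generating functions, and read off the coefficient of $z^n/n!$, with the only nontrivial step being the polynomial identity $\frac{(2x-1)^n-x^{n+1}}{x-1}=\sum_{k=0}^n x^{n-k}\sum_{i=k}^n(-1)^{i-1}2^{n-i}\binom{n}{i}$, which the paper also leaves as "routine to check" and which your multiply-back-by-$(x-1)$ telescoping argument does verify.
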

It would be interesting to present a combinatorial proof of Corollary~\ref{cor3.3}.

Let $$B(n,k)=\sum_{i=k}^n(-1)^{k-i}2^{n-i}\binom{n}{i}.$$
It should be noted that the numbers $B(n,k)$ are known as the $(k-2)$-nd {\it Betti numbers} of the complement
of the $k$-equal real hyperplane arrangement in $\mathbb{R}^n$ (see~\cite[Theorem~4.1.5]{Green09} for instance).
The Betti number $B(n,i)$ was first studied by Bj\"{o}rner and Welker~\cite{Bjorner}, and subsequently studied by Green~\cite{Green09,Green10}. The reader is referred to Green~\cite[page~1038]{Green10} for various interpretations of the numbers $B(n,i)$.

From Corollary~\ref{cor3.3}, we see that
\begin{equation}\label{betti}
\sum_{\sigma\in\widehat{\Q}_{n+1}}x^{\exc(\sigma)}(-1)^{\cyc(\sigma)}
=\sum_{k=0}^n(-1)^{k+1}B(n,k)x^{n-k}.
\end{equation}

An {\it anti-excedance} in
$\pi\in\msn$ is an index $i$ such that $\pi(i)\leq i$.
Let $\aexc(\pi)$ be the number of anti-excedances of $\pi$.
Clearly, $\exc(\pi)+\aexc(\pi)=n$ for $\pi\in\msn$. For $\pi\in\widehat{\Q}_{n+1}$, if $\exc(\pi)=n-k$, then $\aexc(\pi)=k+1$.
Therefore, using~\eqref{betti}, we get the following result.
\begin{thm}\label{Betticombinatorial}
For $n\geq 0$, we have
$$B(n,k)=\sum_{\substack{\pi\in\widehat{\Q}_{n+1} \\ \exc(\pi)=n-k}}(-1)^{\cyc(\pi)+\aexc(\pi)}.$$
\end{thm}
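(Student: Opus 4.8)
The plan is to obtain Theorem~\ref{Betticombinatorial} as an immediate reformulation of the second identity in Corollary~\ref{cor3.3} via two easy observations about the statistics on $\widehat{\Q}_{n+1}$. First I would recall that $\widetilde{A}_n(x,1,-1,1)=\sum_{\sigma\in\widehat{\Q}_{n+1}}x^{\exc(\sigma)}(-1)^{\cyc(\sigma)}$, which is already displayed in Corollary~\ref{cor3.3}, and that equation~\eqref{betti} rewrites the right-hand side polynomial as $\sum_{k=0}^n(-1)^{k+1}B(n,k)x^{n-k}$, using the definition $B(n,k)=\sum_{i=k}^n(-1)^{k-i}2^{n-i}\binom{n}{i}$ together with the computation in~\eqref{betti-GF01}. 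So the combinatorial content to be proved is just: the coefficient of $x^{n-k}$ in $\sum_{\sigma\in\widehat{\Q}_{n+1}}x^{\exc(\sigma)}(-1)^{\cyc(\sigma)}$ equals $(-1)^{k+1}B(n,k)$, and that coefficient can be rewritten as a sign-weighted count over the fiber $\{\pi\in\widehat{\Q}_{n+1}:\exc(\pi)=n-k\}$.

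The key step is the sign bookkeeping. For a permutation $\pi\in\mathfrak{S}_{n+1}$ we have $\exc(\pi)+\aexc(\pi)=n+1$, but the relevant ambient set here is $\widehat{\Q}_{n+1}$ indexed so that $\widetilde{A}_n$ records data about permutations of $[n+1]$; as noted in the paragraph preceding the theorem, for $\pi\in\widehat{\Q}_{n+1}$ with $\exc(\pi)=n-k$ one has $\aexc(\pi)=k+1$. Hence on this fiber $(-1)^{k+1}=(-1)^{\aexc(\pi)}$, and therefore
\[
(-1)^{k+1}\!\!\sum_{\substack{\pi\in\widehat{\Q}_{n+1}\\ \exc(\pi)=n-k}}\!\!(-1)^{\cyc(\pi)}
=\!\!\sum_{\substack{\pi\in\widehat{\Q}_{n+1}\\ \exc(\pi)=n-k}}\!\!(-1)^{\cyc(\pi)+\aexc(\pi)}.
\]
Extracting the coefficient of $x^{n-k}$ from both sides of~\eqref{betti} and dividing through by $(-1)^{k+1}$ (equivalently, multiplying by $(-1)^{k+1}$) then yields exactly $B(n,k)=\sum_{\pi\in\widehat{\Q}_{n+1},\,\exc(\pi)=n-k}(-1)^{\cyc(\pi)+\aexc(\pi)}$.

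I do not anticipate a genuine obstacle: the theorem is purely a repackaging, and all the analytic work (the generating-function identities of Corollary~\ref{cor3.3} and the expansion~\eqref{betti-GF01} identifying the coefficients with the Betti numbers $B(n,k)$) is already in place. The only point requiring care is the index shift $\exc(\pi)=n-k\Leftrightarrow\aexc(\pi)=k+1$ for $\pi\in\widehat{\Q}_{n+1}$ — one must remember that these are permutations of an $(n+1)$-element set, so $\exc+\aexc=n+1$ rather than $n$ — and the resulting conversion of the global sign $(-1)^{k+1}$ attached to $B(n,k)$ in~\eqref{betti} into the local weight $(-1)^{\aexc(\pi)}$. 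Once that is observed, the statement follows by comparing coefficients of $x^{n-k}$ in~\eqref{betti}.
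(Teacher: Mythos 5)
Your proposal is correct and follows exactly the paper's own route: the paper derives the theorem by extracting the coefficient of $x^{n-k}$ from~\eqref{betti} and converting the global sign $(-1)^{k+1}$ into the local weight $(-1)^{\aexc(\pi)}$ via the observation that $\exc(\pi)+\aexc(\pi)=n+1$ for $\pi\in\widehat{\Q}_{n+1}$, so $\exc(\pi)=n-k$ forces $\aexc(\pi)=k+1$. Your careful note about the index shift (permutations of an $(n+1)$-element set) is precisely the one point the paper also highlights.
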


Using Theorem~\eqref{Betticombinatorial}, one may introduce some $q$-analogs of the Betti numbers $B(n,k)$.

Let $B_n(x)=\sum_{k=0}^nB(n,k)x^{k}$.
Combining~\eqref{betti-GF01} and~\eqref{betti}, we obtain the following result.
\begin{prop}\label{BettiGF}
We have
$$\sum_{n\geq 0}B_n(x)\frac{z^n}{n!}=\frac{e^z+xe^{(2+x)z}}{1+x}.$$
\end{prop}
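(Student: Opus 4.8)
The plan is to first obtain a closed form for the polynomial $B_n(x)$ itself, and then sum the resulting exponential generating function term by term. Starting from $B_n(x)=\sum_{k=0}^n B(n,k)x^k$ together with $B(n,k)=\sum_{i=k}^n(-1)^{k-i}2^{n-i}\binom{n}{i}$, I would interchange the order of summation over the region $0\le k\le i\le n$ to obtain
$$B_n(x)=\sum_{i=0}^n 2^{n-i}\binom{n}{i}\sum_{k=0}^i(-1)^{k-i}x^k,$$
treating $x$ as an indeterminate (equivalently, a real number $\neq-1$) so that the finite geometric series below may be summed.

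Next, since $(-1)^{k-i}x^k=(-1)^i(-x)^k$, the inner sum equals $(-1)^i\bigl(1-(-x)^{i+1}\bigr)/(1+x)$, and hence
$$B_n(x)=\frac{1}{1+x}\sum_{i=0}^n 2^{n-i}\binom{n}{i}(-1)^i\bigl(1-(-x)^{i+1}\bigr).$$
I would split this into two binomial sums: the contribution of the term $1$ is $\sum_{i=0}^n\binom{n}{i}2^{n-i}(-1)^i=(2-1)^n=1$, while for the other term one notes $(-1)^i(-x)^{i+1}=-x^{i+1}$, so its contribution is $-x\sum_{i=0}^n\binom{n}{i}2^{n-i}x^i=-x(2+x)^n$. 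Therefore $B_n(x)=\dfrac{1+x(2+x)^n}{1+x}$.

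Finally I would sum over $n$ to get
$$\sum_{n\geq 0}B_n(x)\frac{z^n}{n!}=\frac{1}{1+x}\sum_{n\geq 0}\frac{z^n}{n!}+\frac{x}{1+x}\sum_{n\geq 0}(2+x)^n\frac{z^n}{n!}=\frac{e^z+xe^{(2+x)z}}{1+x},$$
which is the asserted identity. An alternative route, closer to the quoted ingredients, would be to first combine \eqref{betti} with Corollary~\ref{cor3.3} and the generating function \eqref{betti-GF01} to obtain the compact form $\widetilde{A}_n(x,1,-1,1)=\bigl((2x-1)^n-x^{n+1}\bigr)/(x-1)$, then observe via \eqref{betti} that $x^n\widetilde{A}_n(1/x,1,-1,1)=-B_n(-x)$ (so $\widetilde{A}_n(x,1,-1,1)$ is, up to a sign, the reversal of $B_n(x)$), and finally substitute $x\mapsto-1/x$; this recovers the same closed form for $B_n(x)$ and hence the same generating function. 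Either way the computation is essentially routine; the step most likely to introduce errors is the sign bookkeeping in $(-1)^{k-i}$ and in $(-1)^i(-x)^{i+1}=-x^{i+1}$, and, for the alternative route, the simultaneous inversion $x\mapsto 1/x$ and rescaling of $z$ needed to read off the closed form of $\widetilde{A}_n(x,1,-1,1)$ from \eqref{betti-GF01}. Neither presents a conceptual obstacle.
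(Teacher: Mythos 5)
Your proof is correct, and your primary route is genuinely different from the paper's. The paper obtains the proposition by combining \eqref{betti} with \eqref{betti-GF01}: it reads off $\widetilde{A}_n(x,1,-1,1)=\bigl((2x-1)^n-x^{n+1}\bigr)/(x-1)$ from the generating function and then uses the relation $\widetilde{A}_n(x,1,-1,1)=\sum_{k}(-1)^{k+1}B(n,k)x^{n-k}$ to convert this into a statement about $B_n$ — exactly the ``alternative route'' you sketch at the end (and your reversal identity $x^n\widetilde{A}_n(1/x,1,-1,1)=-B_n(-x)$ is the right way to organize that conversion). Your main argument instead works directly from the definition $B(n,k)=\sum_{i=k}^n(-1)^{k-i}2^{n-i}\binom{n}{i}$, interchanges the order of summation, sums the geometric series, and arrives at the closed form $B_n(x)=\bigl(1+x(2+x)^n\bigr)/(1+x)$, from which the exponential generating function is immediate; I checked the sign bookkeeping and the small cases $B_0(x)=1$, $B_1(x)=1+x$, $B_2(x)=1+3x+x^2$, and everything is consistent. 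The direct route is self-contained and arguably cleaner, since it bypasses the permutation-statistic machinery entirely; the paper's route has the advantage of exhibiting $B_n(x)$ as (up to reversal and sign) the signed-excedance polynomial $\widetilde{A}_n(x,1,-1,1)$, which is the point of Theorem~\ref{Betticombinatorial}. The only presentational nit: when you say the ``contribution'' of the second term is $-x(2+x)^n$ and then write $+x(2+x)^n$ in the numerator, the sign flip comes from the subtraction in $1-(-x)^{i+1}$; it is correct as you have it, but worth spelling out to avoid the appearance of a sign error.
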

%

Define $$T_n(q)=\sum_{\sigma\in\widehat{\Q}_{n+1}}q^{\cyc(\sigma)}=\sum_{k=1}^{n+1}T(n,k)q^k.$$
Let $T(q,z)=\sum_{n\geq 0}T_n(q)\frac{z^n}{n!}$.
It follows from~\eqref{EGF-A} that
\begin{equation}\label{Tqz}
T(q,z)=qe^{z}\sum_{n\geq 0}\sum_{k=0}^n\stirling{n}{k}q^k\frac{z^n}{n!}=\frac{qe^{z}}{(1-z)^q},
\end{equation}
where $\stirling{n}{k}$ is the signless Stirling number of the first kind, i.e., the number of permutations of $\msn$ with $k$ cycles.
Using~\eqref{Tqz}, we immediately get the following result.
\begin{prop}
For $n\geq 2$, we have
$T_n(-1)=\sum_{\sigma\in\widehat{\Q}_{n+1}}(-1)^{\cyc(\sigma)}=n-1$.
\end{prop}

Let $F_n(q)=\sum_{k=0}^n\stirling{n}{k}q^k$.
Combining~\eqref{Tqz} and the well known recurrence relation $F_n(q)=(n-1+q)F_{n-1}(q)$, one can easily derive that
the polynomials $T_n(q)$ satisfy the recurrence relation
\begin{equation}\label{Tnq-recu}
T_{n+1}(q)=(n+1+q)T_n(q)-nT_{n-1}(q),
\end{equation}
with the initial conditions
$T_0(q)=q,~T_1(q)=q+q^2$. Equivalently,
we have
\begin{equation*}\label{Tnk-recu}
T(n+1,k)=(n+1)T(n,k)+T(n,k-1)-nT(n-1,k).
\end{equation*}

Recall that
the Charlier polynomials are defined by
$$C_n^{(a)}(x)=\sum_{k=0}^n(-a)^{n-k}\binom{n}{k}\binom{x}{k}k!,~a\neq 0.$$
These polynomials are generated by
${e^{-az}}{(1+z)^x}=\sum_{n\geq 0}C_n^{(a)}(x)\frac{z^n}{n!}$.
Hence $$T_n(q)=(-1)^nqC_n^{(1)}(-q)=q\sum_{k=0}^n(-1)^k\binom{n}{k}\binom{-q}{k}k!.$$
It is well known that Charlier polynomials are orthogonal polynomials and have only real zeros. Hence the polynomial
$T_n(q)$ has only real zeros for any $n\geq 0$.

\end{document}